\newcommand{\figs}{\usepackage{graphicx}
\usepackage{color} \graphicspath{{figs/}} \numberwithin{figure}{section} }
\newcounter{margnotes}
\def\sideremark#1{\ifvmode\leavevmode\fi\vadjust{\vbox to0pt{\vss % the remark
      \hbox to 0pt{\hskip\hsize\hskip1em           %                will appear only
 \vbox{\hsize3cm\tiny\raggedright\pretolerance10000%                on the side
 \noindent #1\hfill}\hss}\vbox to8pt{\vfil}\vss}}}%
\newcounter{lemenumi}
\newcommand{\labelemenumi}{(\alph{lemenumi})}
\newtheorem{definition}{Definition}
\newtheorem{theorem}{Theorem} %[section]
\newtheorem{lemma}{Lemma}
\newtheorem{proposition}{Proposition}
\newtheorem{corollary}{Corollary}
\newcommand{\bbC}{\mathbb{C}}
\newcommand{\bbZ}{\mathbb{Z}}
\newcommand{\ep}{\varepsilon}
\newcommand{\idd}{\mathrm{Id}}
\newcommand{\spn}{\mathrm{Span}}
\DeclareMathOperator{\mn}{\mathcal{M}}
\DeclareMathOperator{\mon}{\mathcal{M}}
\title{On the reduction of the degree of linear differential operators}
\author{Marcin Bobie\'nski}
\author{Lubomir Gavrilov}
\subjclass[2000]{34C08, 34M03, 34M35} \keywords{Abelian integral, Picard-Fuchs
equation, differential Galois group}
\thanks{Supported by Polish MNiSzW Grant No N N201 397937}
\date{\today}
\begin{document}
\begin{abstract}
 Let $L$ be a linear differential operator with coefficients in some differential field $k$
of characteristic zero with algebraically closed field of constants. Let  $k^a$
be the  algebraic closure of $k$. For a solution $y_0$, $Ly_0=0$, we determine
the linear differential operator of minimal degree $\widetilde{L}$ and
coefficients in $k^a $, such that $\widetilde{L}y_0=0$. This result is then
applied to some Picard-Fuchs equations which appear in the study of
perturbations of plane polynomial vector fields of Lotka-Volterra type.
\end{abstract}
\maketitle
\section{Introduction}
\label{sec:intro} Let $y_0=y_0(t)$ be a solution of the linear differential
equation
 \begin{equation}\label{aa}
a_0(t) y^{(n)} + a_1(t) I^{(n-1)} + \dots a_n(t) I = 0
\end{equation}
 where $a_i\in
k=\mathbb{C}(t)$ are functions, rational in the independent variable $t$. We
are interested in determining the equation of minimal degree $d\leq n$
\begin{equation}\label{bb}
b_0(t) y^{(d)} + b_1(t) y^{(d-1)} + \dots b_d(t) y = 0
\end{equation}
such that
\begin{itemize}
    \item $y_0$ is a solution
    \item the coefficients $b_i$ are algebraic functions in $t$
\end{itemize}
Recall that a function $b(t)$ is said to be algebraic in $t$ if there exists a
polynomial $P$ with coefficients in $k=\mathbb{C}(t)$, such that $P(b(t))\equiv
0$.

We shall suppose that, more generally, $k$ is an arbitrary differential field
of characteristic zero with algebraically closed field of constants, $k^a$ is
its algebraic closure, and $a_i\in k$, $b_j\in k^a$. To find the equation
$(\ref{bb})$ we consider the differential Galois group $G$ of $(\ref{aa})$ and
its connected subgroup $G^0$ containing the unit element of $G$. Our first
result, Theorem \ref{main}, says that the orbit of $y_0$ under the action of
$G^0$ spans the solution space of $(\ref{bb})$.

A particular attention is given further to the case in which (\ref{aa}) is of
Fuchs or Picard-Fuchs type. Theorem \ref{main} is re-formulated in terms of the
action of the corresponding monodromy groups in Theorem \ref{th:main1} and
Theorem \ref{th:algeq}.

In the last part of the paper, section \ref{sec:ex}, we apply the general
theory to some Abelian integrals appearing in the study of perturbations of the
Lotka-Volterra system. These integrals have the form
$$
I(t) = \int_{\gamma(t)} \omega
$$
where
$$\gamma(t)\subset \{ (x,y)\in\mathbb{C}^2 : F(x,y)=t\}$$
is a continuous family of ovals,
$$
F(x,y)= x^py^p(1-x-y) \mbox{  or  } F(x,y)= x^p (y^2-x-1)^q,\;\; p,q\in
\mathbb{N}
$$
and $\omega$ is a suitable rational one-form on $\mathbb{C}^2$. In the first
case the Abelian integral satisfies a Picard-Fuchs equation of order $2p+2$. It
has been shown by van Gils and Horozov \cite{hvg}, that $I(t)$ satisfies also a
second order differential equation whose coefficients are functions algebraic
in $t$. This allows to compute the zeros of $I(t)$ (by the usual Roll's theorem
for differential equations) and finally, to estimate the number of limit cycles
of the perturbed plane foliation defined by
$$
dF +\varepsilon \widetilde{\omega}=0
$$
where $\widetilde{\omega}$ is a real polynomial one-form on $\mathbb{R}^2$. By
making use of  Theorem \ref{main} we provide  the theoretical explanation of
the phenomenon observed observed first in \cite{hvg}, see section
\ref{sec:hvg}. Another interesting case, studied in the paper is when $F(x,y)=
x^p (y^2-x-1)^q$ ($p,q$ relatively prime). The Abelian integral $I(t)$
satisfies a Picard-Fuchs equation of order $p+q+1$, which is the dimension of
the first homology group of the generic fiber $F^{-1}(t)$. We show that the
minimal order of the equation (\ref{bb}) is $p+q+1$ or $p+q$ or $p+q-1$, and
that the coefficients $b_i(t)$ are rational in $t$, see section
\ref{sec:parab}. The meaning of this is that the differential Galois group of
the Picard-Fuchs equation is connected and, in contrast to \cite{hvg}, there is
no reduction of the degree, which may only drop by one or two, depending on
whether $\omega$ has or has not residues "at infinity".

\begin{center}
\emph{Acknowledgements}\end{center}

 Part of the paper was written while the first author was
visiting the University Paul Sabatier of Toulouse. He is obliged for the
hospitality.

\section{Statement of the result}
\label{sec:gen}

Let $k$ be a differential field of characteristic zero with algebraically
closed field of constants $C$, $E\supset k$ be a Picard-Vessiot extension for
the homogeneous monic linear differential operator $L$
\begin{equation}\label{l}
L(y)= y^{(n)} + a_{n-1}y^{(n-1)}+ \dots + a_1 y^{(1)} + a_0 y, a_i \in k
\end{equation}
and $y_0\in E$ a solution, $L(y_0)=0$. We denote by $k^a \supset k$ the
algebraic closure of $k$ which is also a differential field.
\begin{definition}
\emph{A homogeneous monic linear differential operator $\widetilde{L}$ with
coefficients in $k^a$ is said to be annihilator of $y_0$, provided that
$\widetilde{L}(y_0)=0$. The annihilator $\widetilde{L}$ is said to be minimal,
provided that its degree is minimal.}
\end{definition}
 The
definition has a sense, because the algebraic closure $E^a$ of $E$ is a
differential field which contains $E$ and $k^a$ as differential subfields. The
minimal annihilator obviously exists and is unique, its degree is bounded by
the degree of $L$ which is an annihilator of $y_0$.

We are interesting in the following question

\emph{For a given solution $y_0$ as above, find the corresponding minimal
annihilator $\widetilde{L}$}

To answer, consider the differential Galois group $G=Gal(E/k)$, which is the
group of differential automorphisms of $E$ fixing $k$. Recall that $G$ is an
algebraic group over $C$, and let
 $G^0$ be the connected component of $G$, containing the unit element (the identity). The intermediate
 field $\widetilde{k}=E^{G^0}$, $k\subset \widetilde{k} \subset E$, of elements
invariant under $G^0$ is then a finite algebraic extension of $k$. We denote it
by $\widetilde{k}$.

Let $y_0, y_1, \dots , y_{d-1}$ be a basis of the $C$-vector space spanned by
the orbit
 $$
G^0y_0 = \{g(y_0): g\in G^0\}\subset E
 $$
and consider the Wronskian determinant in $s$ variables
$$
W(y_1,y_2,\dots,y_s) = \det \left(%
\begin{array}{cccc}
  y_1 & y_2 & \dots & y_s \\
  y_1' & y_2' & \dots & y_s' \\
  \vdots & \vdots & \ddots & \vdots \\
 y_1^{(s-1)} & y_2^{(s-1)}  & \ldots & y_{s-1}^{(s-1)}  \\
\end{array}%
\right) .
$$
$y_0$ satisfies the differential equation
\begin{equation}%\label{wronskian}
W(y,y_0, y_1, \dots , y_{d-1})=0
\end{equation}
and because of the $C$-linear independence of $y_i$
$$
W(y_0,y_1,\dots,y_{d-1})\neq 0 .
$$
Let $\widetilde{L}$ be the monic linear differential operator defined by
\begin{equation}\label{minimal}
\widetilde{L} (y) = \frac{W(y,y_0,y_1,\dots,y_{d-1})}{W(y_0,y_1,\dots,y_{d-1})}
.
\end{equation}
Its coefficients are invariant under the action of $G^0$, and hence they belong
to the differential field $ \widetilde{k}=E^{G^0} . $

Our first result is the following
\begin{theorem}
\label{main} The differential operator $\widetilde{L}$ (\ref{minimal}) is the
minimal annihilator of the solution $y_0$.
\end{theorem}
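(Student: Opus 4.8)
The plan is to show two things: first, that $\widetilde{L}$ as constructed genuinely has coefficients in $k^a$ and annihilates $y_0$ (so it is a legitimate annihilator), and second, that no annihilator of strictly smaller degree can exist. Let me think about how to carry this out.

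The construction gives $\widetilde L y_0 = 0$ trivially: the Wronskian $W(y_0, y_0, y_1, \dots, y_{d-1})$ has a repeated column and vanishes. The coefficients are invariant under $G^0$ because each $g \in G^0$ permutes the orbit $G^0 y_0$, hence maps the span of $\{y_0, \dots, y_{d-1}\}$ to itself, so $g$ multiplies the numerator and denominator Wronskians by the same nonzero determinant (the Jacobian of the linear change of basis), leaving the quotient fixed. So the coefficients lie in $E^{G^0} = \widetilde k$, which is a finite algebraic extension of $k$, hence contained in $k^a$. This part is essentially spelled out in the excerpt.

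The heart is minimality. Suppose $M$ is any monic annihilator with coefficients in $k^a$ and $M y_0 = 0$. I would argue that the solution space of $M$ must contain the entire orbit $G^0 y_0$, which forces $\deg M \geq d$. The key point is to understand how $G$ (or at least $G^0$) acts on the coefficients of $M$ and on its solutions. Since the coefficients of $M$ lie in $k^a$, they are fixed by $G^0$ (because $G^0$, being connected, fixes $\widetilde k = E^{G^0}$, and one checks $k^a \cap E = \widetilde k$, or more carefully, that the action of $G^0$ extends to $E^a$ fixing $k^a$). Therefore for any $g \in G^0$, applying $g$ to the equation $M y_0 = 0$ gives $M(g y_0) = 0$, so every element of the orbit $G^0 y_0$ is a solution of $M$. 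The solution space of $M$ is a $C$-vector space of dimension $\deg M$ containing all of $G^0 y_0$, whose $C$-span has dimension $d$ by definition; hence $\deg M \geq d = \deg \widetilde L$.

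The main obstacle — and the step demanding the most care — is justifying that $G^0$ acts on $E^a$ fixing $k^a$ pointwise, so that the manipulation $g(M y_0) = M(g y_0)$ is valid with the same operator $M$. This requires extending each differential automorphism $g \in G^0 \subset \mathrm{Gal}(E/k)$ to the compositum $E \cdot k^a$ (inside $E^a$) in a way that fixes $k^a$; one must check such an extension exists, is a differential automorphism, and that the coefficients of $M$, lying in $k^a$, are genuinely fixed. The subtlety is that a priori $g$ only fixes $k$, not all of $k^a$; the resolution is that $G^0$ is connected while $\mathrm{Gal}(k^a/k)$ is profinite (totally disconnected), so the induced action of $G^0$ on the relevant finite extensions is trivial. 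I would make this precise using the short exact sequence relating $\mathrm{Gal}(E/k)$, its identity component, and the Galois group of the algebraic extension $\widetilde k / k$, concluding that $G^0$ acts trivially on any algebraic-over-$k$ element of $E^a$, in particular on the coefficients of $M$.
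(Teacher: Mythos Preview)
Your proposal is correct, and the core idea---that $G^0$ must fix the coefficients of any annihilator over $k^a$, forcing its solution space to contain the whole orbit $G^0 y_0$---is exactly the right one. However, your route differs from the paper's in how the delicate extension issue is handled.

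The paper avoids extending the $G^0$-action to $E^a$ altogether. Instead, it first proves that the minimal annihilator $L_{min}$ (and its Picard--Vessiot extension $E_{min}$) can be realized \emph{inside} $E$: it forms the compositum $k_{min}E$, observes that $\mathrm{Gal}(k_{min}E/k_{min})$ preserves $E$, builds a Wronskian operator from the orbit of $y_0$ under this group, and then uses that the ideal of linear annihilators over $k_{min}$ is principal to conclude $k_{min}\subset E_{min}\subset E$. Once $k_{min}\subset E$, the fact that $G^0$ fixes every element of $E$ algebraic over $k$ is immediate from $E^{G^0}=\widetilde k$, and no extension of automorphisms beyond $E$ is needed. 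The two inequalities $\deg L_{min}\ge \deg\widetilde L$ and $\deg L_{min}\le \deg\widetilde L$ then fall out.

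Your approach is more direct: you argue for an \emph{arbitrary} annihilator $M$ over $k^a$, not just the minimal one, and deduce $\deg M\ge d$ in one stroke. The price is the extension step you flagged: you must show that each $g\in G^0$ lifts to a differential automorphism of the compositum $k'E$ (where $k'$ is the finite extension generated by the coefficients of $M$) fixing $k'$. This follows from the standard isomorphism $\mathrm{Gal}(k'E/k')\cong \mathrm{Gal}(E/(E\cap k'))$, noting that $E\cap k'\subset\widetilde k$ forces $\mathrm{Gal}(E/(E\cap k'))\supseteq G^0$; your connectedness/profiniteness heuristic points at the same fact but should be made precise this way. Once that is in hand, your argument is cleaner than the paper's and bypasses the auxiliary Wronskian construction and the principal-ideal argument entirely.
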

\begin{proof}
Let $L_{min}$ be the unique differential operator of minimal degree with
coefficients in some algebraic extension $k_{min}$ of $k$, such that
$L_{min}(y_0)=0$. Denote by $E_{min}$ the Picard-Vessiot extension for
$L_{min}$.

As a first step, we shall show that $E_{min}$ can be identified to a
differential subfield of the Picar-Vessiot extension $E$ for $L$. The algebraic
closure $E^a$ of $E$ is a differential field which contains $E$ and every
algebrtaic extension of $k$ (hence it contains  $k_{min}$). Therefore the
compositum $k_{min} E$ of $ k_{min}$ and $  E$, that is to say the smallest
field containing $E$ and $\tilde{k}$, is well defined \cite{lang}. The
differential automorphisms group $Gal(k_{min} E/k)$ acts on the compositum
$k_{min} E$ and leaves $E$ invariant. Therefore $Gal(k_{min} E/k_{min})\subset
Gal(k_{min} E/k)$ leaves $E$ invariant too, and the orbit $Gal(k_{min}
E/k_{min}) y_0$ is contained in $E$. Let $y_0, y_1, \dots , y_{m-1}$ be a basis
of the $C$-vector space spanned by this orbit. Then $y_0$ satisfies the
differential equation
\begin{equation}\label{wronskian}
\frac{W(y,y_0, y_1, \dots , y_{m-1})}{W(y_0, y_1, \dots , y_{m-1})}=0
\end{equation}
and the coefficients of the corresponding monic linear homogeneous differential
operator belong to $k_{min}$.

Consider the ring of differential polynomials
$$
k_{min}\{Y\}= k_{min}[{Y^{(i)}: i=0,1,2,\dots }]
$$
in formal variables $Y^{(i)}$. Identifying differential operators on
$\widetilde{k}$ to polynomials ( the derivatives $y^{(i)}$ correspond to
variables $Y^{(i)}$ ), we may consider the ideal $I$ generated by homogeneous
linear differential operators with coefficients in $k_{min}$ which annihilate
$y_0$. This is obviously a linear ideal which, according to the general theory
(see \cite[Proposition 1.8]{magid} ), is principal in the following sense.
There exists a linear differential operator  with coefficients in $k_{min}$,
which generates $I$. Clearly the generator of $I$ is the operator $L_{min}$
defined above. It follows that the solution space of $L_{min}$ can be
identified to a $C$-vector subspace of the solution space of the operator
defined by (\ref{wronskian}), which implies
\begin{equation}
\label{kkee} k \subset k_{min} \subset E_{min} \subset E .
\end{equation}
(the first two inclusions hold by definition).

At the second step of the proof we shall show that $\deg L_{min} = \deg
\widetilde{L}$. Indeed, the automorphisms group $G^0$ leaves fixed the elements
of $E$ which are algebraic on $k$. In particular, the elements of $k_{min}$ are
fixed by $G^0$ and hence $G^0$ induces differential automorphisms of the
Picard-Vessiot extension $E_{min}$. This shows that the solution space of
$L_{min}$ contains the solution space of $\widetilde{L}$ and
$$
\deg L_{min} \geq \deg \widetilde{L} .
$$

Reciprocally, if we consider (by the construction above) the ideal in
$\widetilde{k}\{Y\}$ generated by all linear homogeneous differential operators
with coefficients in $\widetilde{k}$, which annihilate $y_0$, then this ideal
is linear and principal. The generator of the ideal corresponds to the operator
$L_{min}$, and hence
$$
\deg L_{min} \leq \deg \widetilde{L} .
$$
Theorem \ref{main} is proved.
\end{proof}

To the end of this section we apply Theorem \ref{main} to Fuchs and
Picard-Fuchs differential operators. The minimal annihilator of a solution is
described in terms of the action of the monodromy group.

Let $L$  be a Fuchsian differential operator of order $n$ on the Riemann sphere
$\mathbb{P}^1$,  $\Delta=\{ t_1,\ldots,t_s,\infty \}$ be the set of its
singular points. The field of constants is $C=\mathbb{C}$, the coefficients of
$L$ belong to the field of rational functions $k=\bbC(t)$. Denote by $S\cong
\mathbb{C}^n$ the complex vector space of solutions of $L=0$. The monodromy
group $\mn$ of $L$ is the image of the homomorphism (monodromy representation)
$$
\pi_1(\mathbb{P}^1 \setminus \Delta,*) \rightarrow GL(S) .
$$
The Zariski closure of $\mn$ in $GL(S)$ is the differential Galois group $G$ of
$L$:
$$\overline {\mn}=G .$$
A vector subspace $V \subset S$ is invariant under  the action of $G$ if and
only if it is invariant under the action of $\mn$. A subspace $V \subset S$ is
said to be \emph{virtually invariant}, provided that it is invariant under the
action of identity component $G^0$ of $G$, or equivalently, under the action of
$\mn\bigcap G^0$. For an automorphism $g\in G$ the set $g(V)\subset S$ is a
vector subspace of the same dimension. Thus $G$ acts on the Grassmannian space
$Gr(d,S)$
$$
G \times Gr(d,S) \rightarrow Gr(d,S): (g,V) \mapsto g(V) .
$$
and for every
plane $V\in Gr(d,S)$ the orbit
$$
G(V)= \{ g(V): g\in G\}\subset Gr(d,S)
$$
is well defined.
\begin{lemma}
\label{virtual} A plane $V\in Gr(d,S)$ is virtually invariant, if and only if
the orbit $G(V)\subset Gr(d,S)$ is finite.
\end{lemma}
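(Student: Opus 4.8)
The plan is to exploit that $G$ is an algebraic group, so that $G^0$ is a closed subgroup of finite index, and that $G$ acts morphically on $Gr(d,S)$ through the natural (Pl\"ucker) action of $GL(S)$. The two implications then separate cleanly, with all the substance concentrated in the converse.

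For the forward implication, suppose $V$ is virtually invariant, that is $G^0 V = V$. Since $[G:G^0] = m < \infty$, I would write $G = \bigcup_{i=1}^m g_i G^0$ as a finite union of left cosets. Any $g \in G$ has the form $g = g_i h$ with $h \in G^0$, and then $g(V) = g_i(h(V)) = g_i(V)$ because $h$ fixes $V$. Hence $G(V) \subseteq \{g_1(V), \dots, g_m(V)\}$ is finite.

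For the reverse implication, I would pass to the stabilizer $H = \{g \in G : g(V) = V\}$. Since the action is a morphism of varieties and $V$ is a closed point of $Gr(d,S)$, the orbit map $g \mapsto g(V)$ is a morphism and $H$ is its fibre over $V$; thus $H$ is a Zariski closed subgroup of $G$. The orbit $G(V)$ is in bijection with the coset space $G/H$ via $gH \mapsto g(V)$, so finiteness of $G(V)$ forces $[G:H] < \infty$. The key step is then to deduce $G^0 \subseteq H$: a closed subgroup of finite index is also open, its complement being a finite union of translates of $H$, each closed; and an open subgroup contains the identity component, since $H \cap G^0$ is then a nonempty open and closed subset of the connected set $G^0$ and must equal $G^0$. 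Therefore $G^0 \subseteq H$, i.e. $G^0 V = V$, which is exactly virtual invariance.

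The step I expect to require the most care is this last one in the converse direction---the passage from \emph{finite orbit} to \emph{$G^0$ is contained in the stabilizer}---which rests on the standard but essential facts that the stabilizer of a (closed) point under a morphic action is Zariski closed, and that a finite-index closed subgroup of an algebraic group is open and hence contains the connected component of the identity.
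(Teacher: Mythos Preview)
Your argument is correct. The forward implication is the same as the paper's: both use that $G/G^0$ is finite to bound the orbit by a set of coset representatives.

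For the converse you take a genuinely different route. You pass to the stabilizer $H$ of $V$, show it is Zariski closed of finite index, hence open, hence contains $G^0$. The paper argues instead directly on the orbit: from $G^0(V)\subset G(V)$ one gets that $G^0(V)$ is finite, and since $G^0$ is connected its image $G^0(V)$ under the continuous orbit map is connected; a finite connected set is a single point, so $G^0(V)=\{V\}$. The paper's argument is shorter and purely topological, while yours is more structural and makes the role of the algebraic-group stabilizer explicit; your version would transfer verbatim to any setting where one works only with the Zariski topology and wants to avoid invoking connectedness of images. One incidental difference: the paper's proof also weaves in the monodromy group $\mn$, establishing along the way that finiteness of $\mn(V)$ is equivalent to finiteness of $G(V)$ via $\overline{\mn}=G$; this is not part of the lemma as stated and your proof, which never mentions $\mn$, is cleaner in that respect.
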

\begin{proof}
We have
$$
\overline{\mn(V)} = \overline{\mn}(V) = G(V) \supset G^{0} (V)
.
$$
If the orbit $\mn(V)$ is finite, then $\mn(V)=
\overline{\mn(V)}$ and hence $G^{0}(V)$ is finite. As $G^0$ is a
connected Lie group, then $G^{0}(V)= V$ and $V$ is virtually invariant.

Suppose that $V$ is virtually invariant. As $G/G^0$ is a finite group, then
$G^{0}(V)= V$ implies that the orbit $\overline{\mn}(V) = G(V)\subset
Gr(d,S)$ is finite and hence $\mn(V) \subset  \overline{\mn}(V)$
is finite too.\\
\end{proof}
Let $L$ be a Fuchsian differential operator as above, and $y_0$ a solution,
$L(y_0)=0$. The minimal annihilator of $y_0$ is a differential operator
$\widetilde{L}$ of minimal degree with coefficients in some algebraic extension
of $\mathbb{C}(t)$. Thus $\widetilde{L}$ is a Fuchsian operator too, but on a
suitable compact Riemann surface realized as a finite covering of
$\mathbb{P}^1$. Let $V_1, V_2\subset S$ be two virtually invariant planes
containing the solution $y_0$. Then $V_1\cap V_2$ is a virtually invariant
plane containing $y_0$. This shows the existence of a unique virtually
invariant plane $V$ of minimal dimension, containing $y_0$. We call such a
plane minimal. According to Lemma \ref{virtual} and Theorem \ref{main} the
minimal annihilator of $y_0$ is constructed as follows. Let
$y_0,y_1,\dots,y_{d-1}$ be a basis of the minimal virtually invariant plane $V$
containing $y_0$. Consider the Fuchsian differential operator defined as in
formula (\ref{minimal}).
\begin{theorem}
\label{th:main1} The differential operator $\widetilde{L}$ is the minimal
annihilator of the solution $y_0$. The degree of $\widetilde{L}$ equals the
dimension of the minimal virtually invariant plane containing $y_0$.
\end{theorem}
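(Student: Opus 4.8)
The plan is to deduce Theorem~\ref{th:main1} directly from Theorem~\ref{main} and Lemma~\ref{virtual}, the only substantive step being to identify the minimal virtually invariant plane $V$ containing $y_0$ with the $\mathbb{C}$-linear span of the orbit $G^0 y_0$ used to build $\widetilde{L}$ in Theorem~\ref{main}. Once this is done, the basis $y_0,y_1,\dots,y_{d-1}$ of $V$ appearing in the statement is a basis of $\Span_{\mathbb{C}}(G^0 y_0)$, the operator (\ref{minimal}) is literally the one produced in Theorem~\ref{main}, and the two assertions of the theorem follow at once.

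First I would check that $W:=\Span_{\mathbb{C}}(G^0 y_0)\subset S$ is a virtually invariant plane containing $y_0$. It contains $y_0$ because the identity lies in $G^0$, and since $G^0$ is a group, each $g\in G^0$ permutes the orbit $G^0 y_0$ and hence preserves its span; thus $W$ is $G^0$-invariant, which is the definition of virtual invariance. Conversely, any virtually invariant plane $U$ with $y_0\in U$ is $G^0$-invariant, so it contains $g(y_0)$ for every $g\in G^0$, hence the whole orbit, hence $W$. Therefore $W\subseteq U$ for every virtually invariant $U\ni y_0$; being itself such a plane, $W$ is the minimal one, and by the uniqueness established before the statement $V=W=\Span_{\mathbb{C}}(G^0 y_0)$. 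In particular $d=\dim V$.

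With this identification the operator $\widetilde{L}$ of (\ref{minimal}), formed from a basis of $V$, coincides with the operator of Theorem~\ref{main}; that theorem then gives that $\widetilde{L}$ is the minimal annihilator of $y_0$. Since (\ref{minimal}) is monic of Wronskian order $d$, its degree equals $d=\dim V$, which proves the second assertion. In the Fuchsian setting the coefficients lie in $\widetilde{k}=E^{G^0}$, a finite algebraic extension of $\mathbb{C}(t)$, so they are algebraic in $t$ and $\widetilde{L}$ is Fuchsian on a suitable finite covering of $\mathbb{P}^1$.

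The one delicate point, where I would be most careful, is the passage between the monodromy and differential-Galois descriptions: virtual invariance was phrased both through $G^0$ and through $\mn\cap G^0$, whereas the span construction uses $G^0$. The definition already records these as equivalent, the bridge being $\overline{\mn}=G$ together with the observation (made just above the statement) that a subspace is $G$-invariant exactly when it is $\mn$-invariant; Lemma~\ref{virtual} is then what lets one replace ``$G^0$-invariant'' by the geometrically checkable condition that the orbit $G(V)\subset Gr(d,S)$ be finite. I do not expect a genuine obstacle beyond keeping these two viewpoints aligned, since all the hard content is already carried by Theorem~\ref{main}.
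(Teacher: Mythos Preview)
Your proposal is correct and matches the paper's approach: the paper does not supply a separate proof of Theorem~\ref{th:main1} but treats it as an immediate consequence of Theorem~\ref{main} and Lemma~\ref{virtual}, via the sentence ``According to Lemma~\ref{virtual} and Theorem~\ref{main} the minimal annihilator of $y_0$ is constructed as follows.'' You have simply made explicit the one-line identification $V=\Span_{\mathbb{C}}(G^0 y_0)$ that the paper leaves to the reader.
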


Suppose finally that $L$ is a linear differential operator of Picard-Fuchs (and
hence of Fuchs) type. We shall adapt Theorem \ref{th:main1} to this particular
setting.

Let $F: \bbC^2 \rightarrow \bbC$ be a bivariate non-constant polynomial. It is
known that there is a finite number of atypical points $\Delta=\{
t_1,\ldots,t_n \}$, such that the fibration defined by $F$

\begin{equation}\label{milnor}
F:\bbC^2\setminus F^{-1}(\Delta) \rightarrow \bbC\setminus \Delta
\end{equation}
is locally trivial. The fibers $F^{-1}(t)$, $t\not\in \Delta$ are open Riemann
surfaces, homotopy equivalent to a bouquet of a finite number of circles.
Consider also the associated homology and co-homology bundles with fibers
$H_1(F^{-1}(t),\mathbb{C})$ and $H^1(F^{-1}(t),\mathbb{C})$ respectively. Both
of these vector bundles carry a canonical flat connection. Choose a locally
constant section $\gamma(t)\in H_1(F^{-1}(t),\mathbb{C}$ and consider the
Abelian integral
\begin{equation}\label{abelian}
I(t)= \int_{\gamma(t)}\omega
\end{equation}
where $\omega$ is a meromorphic one-form on $\mathbb{C}^2$ which restricts to a
holomorphic one-form on the complement $\mathbb{C}^2\setminus F^{-1}(\Delta)$.
The Milnor fibration (\ref{milnor}) induces a representation
\begin{equation}\label{hmilnor}
\pi_1(\mathbb{C}\setminus\{\Delta\},*)\rightarrow
Aut(H_1(F^{-1}(t),\mathbb{C}))
\end{equation}
which implies the monodromy representation of the Abelian integral $I(t)$.

Let $V_t\subset H_1(F^{-1}(t),\mathbb{C})$ be a continuous family of complex
vector spaces obtained by a parallel transport. The space $V_t$ can be seen as
a point of the Grassmannian variety $Gr(d,H_1(F^{-1}(t),\mathbb{C}))$.
Therefore the representation (\ref{hmilnor}) induces an action of the
fundamental group $\pi_1(\mathbb{C}\setminus\{\Delta\},*) $ on
$Gr(d,H_1(F^{-1}(t),\mathbb{C}))$.
\begin{definition}
We say that a complex vector space $V_t\subset H_1(F^{-1}(t),\mathbb{C})$ of
dimension $d$ is virtually invariant, provided that its orbit in the
Grassmannian $Gr(d,H_1(F^{-1}(t),\mathbb{C}))$ under the action of
$\pi_1(\mathbb{C}\setminus\{\Delta\},*) $ is finite. A virtually invariant
space $V_t$ is said to be irreducible, if it does not contain non-trivial
proper virtually invariant subspaces.
\end{definition}
%The above definition agrees with  Lemma \ref{virtual}.
Let $\gamma(t)$ be a locally constant section of the homology bundle defined by
$F$. As intersection of virtually invariant vector spaces $V_t\subset
H_1(F^{-1}(t),\mathbb{C})$ containing $\gamma(t)$ is virtually invariant again,
then such an intersection is the minimal virtually invariant space containing
$\gamma(t)$. Clearly a virtually invariant minimal space containing $\gamma(t)$
need not be irreducible : it might contain a virtually invariant subspace
subspace not containing $ \gamma(t)$.

Consider the Abelian integral $I(t)= \int_{\gamma(t)}\omega$, where $\gamma(t)$
is a locally constant section of the homology bundle and $ \omega$ is a
meromorphic one-form as above. Denote by $V_t$ the minimal virtually invariant
vector space containing $\gamma(t)$.
\begin{theorem}
\label{th:algeq} If $V_t$ is irreducible, then either the Abelian integral
$I(t)$  vanishes identically, or its minimal annihilator is a linear
differential operator of degree $d=\dim V_t$.
\end{theorem}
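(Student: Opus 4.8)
The plan is to transport the whole question from the solution space of the Picard--Fuchs operator $L$ to the homology bundle, where the hypothesis on $V_t$ lives, by means of the period map. Fix a base point $*$ and write $S$ for the solution space of $L$. Integration of the fixed meromorphic form $\omega$ against flat sections defines a $\bbC$-linear map
$$
\Phi: H_1(F^{-1}(*),\bbC)\to S,\qquad \Phi(\gamma)= \Big(t\mapsto \int_{\gamma(t)}\omega\Big),
$$
whose image consists of solutions of $L$ by the very definition of a Picard--Fuchs operator, and which sends our distinguished cycle to $\Phi(\gamma)=I(t)$. The first structural fact I would record is that $\Phi$ is equivariant for the monodromy actions: continuing a period around a loop $\sigma\in\pi_1(\bbC\setminus\Delta,*)$ replaces $\gamma(t)$ by $\sigma_*\gamma(t)$ and, since $\omega$ is single-valued on $\bbC^2$, replaces $\Phi(\gamma)$ by $\Phi(\sigma_*\gamma)$. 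Consequently $\Phi$ carries $G^0$-invariant (equivalently, virtually invariant) subspaces to virtually invariant subspaces, while pre-images and kernels of such subspaces are again virtually invariant. By Theorem \ref{th:main1} the theorem will follow once I show that, when $I(t)\not\equiv 0$, the subspace $\Phi(V_t)\subset S$ is the minimal virtually invariant plane of $S$ containing $I(t)$ and has dimension $d=\dim V_t$.

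The role of irreducibility is exactly to control the kernel. The intersection $\ker\Phi\cap V_t$ is a virtually invariant subspace of $V_t$, because $\ker\Phi$ is fixed setwise by all of $\mn$ (hence by $G^0$) and $V_t$ is virtually invariant. Since $V_t$ is irreducible, this intersection is either $V_t$ or $\{0\}$. In the first case $\gamma\in\ker\Phi$, so $I(t)=\Phi(\gamma)\equiv 0$ and we fall into the vanishing alternative. In the second case $\Phi$ is injective on $V_t$, so $\dim\Phi(V_t)=d$.

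It then remains, in the injective case, to prove minimality of $\Phi(V_t)$. First, $\Phi(V_t)$ is itself virtually invariant: for $\sigma\in\mn\cap G^0$ one has $\sigma_*V_t=V_t$, whence $\sigma\cdot\Phi(V_t)=\Phi(\sigma_*V_t)=\Phi(V_t)$. Second, let $W\subset S$ be any virtually invariant subspace with $I(t)\in W$. Then $\Phi^{-1}(W)\cap V_t$ is virtually invariant (the pre-image of a virtually invariant subspace under the equivariant $\Phi$, intersected with the virtually invariant $V_t$) and it contains $\gamma$, because $\Phi(\gamma)=I(t)\in W$. By the minimality of $V_t$ as a virtually invariant space containing $\gamma$, this forces $V_t\subset\Phi^{-1}(W)$, i.e. $\Phi(V_t)\subset W$. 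Hence every virtually invariant plane of $S$ through $I(t)$ contains $\Phi(V_t)$, so $\Phi(V_t)$ is the minimal one and has dimension $d$. Applying Theorem \ref{th:main1} to the solution $I(t)$ then yields a minimal annihilator of degree exactly $d=\dim V_t$.

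The main obstacle I anticipate is not the Grassmannian bookkeeping but pinning down the period map precisely enough: one must check that $\Phi$ genuinely lands in the solution space $S$ and is equivariant on the nose, which requires that $\omega$ be single-valued along the fibration and that the flat structure used to transport $\gamma(t)$ is the Gauss--Manin connection dual to analytic continuation of periods. Once this compatibility is set up correctly, the dichotomy supplied by irreducibility and the pull-back minimality argument are formal, and the only place the hypothesis is actually used is to rule out a proper nonzero $\ker\Phi\cap V_t$, which would otherwise allow the degree to drop below $\dim V_t$.
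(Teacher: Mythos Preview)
Your argument is correct and follows the same route as the paper: transport everything through the equivariant period map $\Phi$, use irreducibility of $V_t$ to force $\ker\Phi\cap V_t$ to be $0$ or all of $V_t$, and then invoke Theorem~\ref{th:main1}. The paper compresses the minimality step into the single claim that $V_t$ is isomorphic to the span $S_t$ of analytic continuations of $I(t)$, whereas you spell out explicitly, via the pull-back $\Phi^{-1}(W)\cap V_t$ and the minimality of $V_t$ among virtually invariant spaces through $\gamma$, why $\Phi(V_t)$ is the minimal virtually invariant plane in $S$ through $I(t)$; this extra paragraph is a genuine clarification rather than a different method.
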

\begin{proof}
Let $S_t$ be the complex vector space of germs of analytic functions in a
neighborhood of $t$, obtained from $I(t)$ by analytic continuation along a
closed path in $\mathbb{C}\setminus \Delta$. It suffice to check that $V_t$ is
isomorphic to $S_t$. Equivalently, for every locally constant section
$\delta(t)\in V_t$ we must show that $\int_{\delta(t)} \omega\not\equiv 0$.
Indeed, the vector space of all locally constant sections $\delta(t)$ with
$\int_{\delta(t)} \omega\equiv 0$ is an invariant subspace of $V_t$. As $V_t$
is supposed to be irreducible, then  this space is trivial.  Theorem
\ref{th:algeq} follows from Theorem \ref{th:main1}.
\end{proof}
The above theorem is easily generalized. For instance, the coefficients of the
minimal annihilator of $I$ are rational functions of $t$ if and only if the
minimal virtually invariant space $V_t$  containing $\gamma$ is monodromy
invariant, i.e.\ its orbit  in the Grassmannian consists of a single point.
Further, it might happen that $V_t$ is reducible. Let $V_t^0$ be a proper
virtually invariant subspace of $V_t$. If the factor space $V_t/V_t^0$ is
irreducible (does not contain proper virtually invariant subspaces), then
Theorem \ref{th:algeq} still holds true, but the minimal annihilator of $I(t)$
is of order equal to $\dim V_t - \dim V_t^0$. Multidimensional Abelian
integrals (along $k$-cycles) are studied in a similar way.

\section{Examples of Abelian integrals related to perturbation of the Lotka-Volterra system}
\label{sec:ex} Let $F$ be a real polynomial and $\omega=Pdx + Qdy$ a real
polynomial differential one-form in $\mathbb{R}^2$. Consider the perturbed real
foliation in $\mathbb{R}^2$ defined by

\begin{equation}\label{16th}
dF + \varepsilon \omega = 0 .
\end{equation}

The infinitesimal 16th Hilbert problem asks for the maximal number of limit
cycles of (\ref{16th}) when $\varepsilon \sim 0$ as a function of the degrees
of $F, P, Q$. Let $\gamma(t)\subset F^{-1}(t)$ be a continuous family of closed
orbits of (\ref{16th}). The zeros of the Abelian integral $I(t)=
\int_{\gamma(t)}\omega$ approximate limit cycles (at least far from the
atypical points of $F$) in the following sense. If $I(t_0)=0, I'(t_0)\neq 0$,
then a limit cycle of (\ref{16th}) tends to the oval $\gamma(t_0)$,  when
$\varepsilon$ tends to $t_0$. The question of explicit computing the number of
zeros  of Abelian integrals remains open (although a substantial progress was
recently achieved, see \cite{ilya02,bny} and the references therein).
Generically an Abelian integral satisfies a Picard-Fuchs differential equation
$$
I^{(d)} + a_1 I^{(d-1)}+\dots a_d I = 0, a_i\in \mathbb{R}(t)
$$
of order equal to the dimension of the homology group of the typical fiber
$F^{-1}(t)$. We are interested in the possibility of reducing the degree of
this equation, assuming that the coefficients of the equation are algebraic in
$t$, $a_i\in \mathbb{C}(t)^a$. Indeed, the zeros of the solutions of a second
order equation are easily studied (by the Rolle's theorem).

In this section we study Abelian integrals which appear in the perturbations of
foliations $dF=0$ with $F = x^p (y^2+x-1)^q$  and $F(x,y)=(xy)^p(x+y-1)$, where
$p,q$ are positive integers. The corresponding foliation $dF=0$  is a  special
Lotka-Volterra system.

\subsection{Toy example $F=x^p y^q$}

Consider first the fibration defined by the polynomial $F=x^p y^q$. We assume
that $p,q$ are \emph{relatively prime}. The base of the fibration is the
punctured plane $B=\bbC\setminus \{0\}$. Each fiber is a sphere with two points
removed. The homology bundle is one-dimensional with trivial monodromy
representation. We investigate the monodromy representation on the
\emph{relative homology} bundle. It will be a basic ingredient of the monodromy
investigation in more complicated cases.

Consider a set of marked points $B_t$ on the complex fibre $F^{-1}(t)$
\[
B_t=(F^{-1}(t)\cap \{x=L\})\cup (F^{-1}(t)\cap \{y=L\}),
\]
where $L$ is a real positive number. The relative homology $H_1(F^{-1}(t),B_t)$ is a free group with $p+q$ generators. A convenient model for the pair $(F^{-1}(t),B_t)$ consists of a cylinder with some strips attached; marked points are located at the ends of these strips.

Note that there exist a unique pair of positive integers $(m,n)$ satisfying the
following relation
\begin{equation}
\label{pqmn}
p\, m+q\, n =1,\qquad |m|<q, \quad |n|<p.
\end{equation}
Let $S\subset \bbC$ be the strip in complex plane around the real segment $[1,L]$. Let $C(r,R)\subset\bbC$ be ring (cylinder) which radii $r$ and $R$ satisfy relations $L^{-1}<r<1<R<L$. The model $M$ is a surface constructed with three charts $U_x$, $U_y$, $U_c$:
\begin{equation}
\begin{split}
U_x &= \{(x,\nu):\ x\in S,\ \nu\in \bbZ/q\},\\
U_y &=  \{(y,\mu):\ y\in S,\ \mu\in \bbZ/p\},\\
U_c &= \{u\in C(r,R) \}
\end{split}
\end{equation}
with the following transition functions (strips $U_x$ are attached to the external circle of radius $R$ and strips $U_y$ are attached to the internal boundary of $U_c$)
\begin{equation}
\label{trfn}
u(x,\nu)= x^{1/q} e^{2\pi i/q\, (-\nu m)},\qquad u(y,\mu)= y^{-1/p} e^{2\pi i/p\, (\mu n)}.
\end{equation}
The marked points are $\{x=L\}$ and $\{y=L\}$ at the end of strips. To construct a map $\psi_t$ we will use a bump function $\varphi\in C^\infty([0,1])$ which is 0 near $s=0$ and 1 near $s=1$. The map $\psi_t:M\rightarrow \bbC^2$ reads
\begin{equation}
\label{psidef}
\psi_t :
\left\{
\begin{aligned}
\psi_t(x,\nu) &= (x,\ t^{1/q} x^{-p/q} e^{2\pi i/q\,\nu})\\
\psi_t(y,\mu) &= (t^{1/p} y^{-q/p} e^{2\pi i/p\,\mu},\ y)\\
\psi_t(u) &= \big(u^q\, \exp(\tfrac {\log t}{p}\, \varphi(\tfrac{|u|-r}{R-r})),\  t^{1/q} u^{-p} \exp(-\tfrac{\log t}{q}\, \varphi(\tfrac{|u|-r}{R-r}) ) \big).\\
\end{aligned}\right.
\end{equation}
\begin{lemma}
\label{lem:toy}
The surface $M$ and the map $\psi_t$ provides a model of fiber for fibration defined by $F=x^p y^q$.
The monodromy transformation
$\mon :M\rightarrow M$ around $t_0=0$ reads
\begin{equation}
\label{toymon}
\mon:
\left\{
\begin{aligned}
\mon(x,\nu) &= (x,\nu+1)\\
\mon(y,\mu) &= (y,\mu+1)\\
\mon(u) &= u\, \exp\big(2\pi i(-\tfrac{m}{q}+\tfrac{1}{pq}\varphi (\tfrac{|u|-r}{R-r}))\big)
\end{aligned}\right.
\end{equation}
\end{lemma}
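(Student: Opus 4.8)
The plan is to split the statement into two independent claims and verify each chart by chart. The first claim is that $\psi_t$ is a well-defined homeomorphism of $M$ onto the fiber $F^{-1}(t)$ carrying the ends of the strips to the marked points $B_t$, so that $M$ is genuinely a model; the second is that the analytic continuation of $\psi_t$ as $t$ encircles $0$ coincides with $\psi_t\circ\mon$, where $\mon$ is the map (\ref{toymon}). Once both are established the lemma follows immediately.

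For the first claim I would begin with a direct substitution showing $F\circ\psi_t\equiv t$ on each of $U_x$, $U_y$, $U_c$: on $U_x$ one gets $x^p\,(t^{1/q}x^{-p/q}e^{2\pi i\nu/q})^q=t\,e^{2\pi i\nu}=t$, similarly on $U_y$, and on $U_c$ the two $\varphi$-factors cancel; hence $\psi_t(M)\subset F^{-1}(t)$. Next I would check that the transition functions (\ref{trfn}) are exactly the identifications making the three local formulas agree on overlaps: substituting $u(x,\nu)=x^{1/q}e^{-2\pi i\nu m/q}$ into $\psi_t|_{U_c}$ and using $pm+qn=1$ to rewrite $u^{q}=x$ and $u^{-p}=x^{-p/q}e^{2\pi i\nu/q}$ reproduces $\psi_t(x,\nu)$ precisely when $\varphi=0$ on that overlap, while the $U_y$-overlap reproduces $\psi_t(y,\mu)$ precisely when $\varphi=1$; this is where the endpoint normalization of the bump function enters. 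Finally, passing to the global coordinate $u$ gives the parametrization $(x,y)=(u^q,\,t^{1/q}u^{-p})$ of $F^{-1}(t)$ by $u\in\bbC^\ast$, injective because $\gcd(p,q)=1$, which identifies the fiber with a cylinder; the strip ends $x=L$ and $y=L$ contribute $q$ and $p$ marked points respectively, matching the rank $p+q$ of $H_1(F^{-1}(t),B_t)$.

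For the monodromy I would analytically continue along $t\mapsto e^{2\pi i}t$, so that $t^{1/q}\mapsto e^{2\pi i/q}t^{1/q}$, $t^{1/p}\mapsto e^{2\pi i/p}t^{1/p}$ and $\log t\mapsto\log t+2\pi i$, and then read off the self-map of $M$ for which the continued $\psi_t$ equals $\psi_t\circ\mon$. On $U_x$ the new factor $e^{2\pi i/q}$ is absorbed by $\nu\mapsto\nu+1$, and on $U_y$ the factor $e^{2\pi i/p}$ by $\mu\mapsto\mu+1$. On $U_c$ one checks that the rotation $u\mapsto u\exp\!\big(2\pi i(-\tfrac{m}{q}+\tfrac{1}{pq}\varphi)\big)$ reproduces both continued components, again after using $pm+qn=1$ to simplify $u^{q}$ and $u^{-p}$; since it preserves $|u|$ it leaves $\varphi$ unchanged and so is a genuine self-map of $M$. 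As a consistency check I would note that at $\varphi=0$ this rotation is $e^{-2\pi i m/q}$, exactly the shift induced on $u$ by $\nu\mapsto\nu+1$ through (\ref{trfn}), and at $\varphi=1$ it is $e^{2\pi i n/p}$, the shift induced by $\mu\mapsto\mu+1$; thus $\mon$ interpolates smoothly between the two cyclic permutations, confirming that it is well defined across all overlaps.

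The step I expect to be most delicate is not any single computation but the global bookkeeping of the multivalued factors $x^{1/q}$, $y^{1/p}$, $t^{1/p}$, $t^{1/q}$ and $\log t$: one must fix branches so that $\psi_t$ is single-valued on each chart and so that the bump function $\varphi$ confines the twisting to the annulus, making the monodromy a bona fide homeomorphism rather than a multivalued correspondence. The arithmetic identity (\ref{pqmn}) is precisely what forces all the resulting roots of unity to cancel, while the bounds $|m|<q$, $|n|<p$ pin down $(m,n)$ uniquely and hence fix the normal form of $\mon$.
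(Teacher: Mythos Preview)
Your proposal is correct and follows essentially the same route as the paper: a chart-by-chart verification that $\psi_t$ lands in $F^{-1}(t)$, that the transition functions \eqref{trfn} make the three local formulas compatible (with the bump function $\varphi$ interpolating between the two gluings), and then reading off the monodromy by analytically continuing $t^{1/p}$, $t^{1/q}$, $\log t$ along $t\mapsto e^{2\pi i}t$. The paper organizes the argument in the opposite direction---starting from the geometry of the fiber (shrinking it by isotopy to a bounded region, then decomposing into a cylinder near $(0,0)$ and two families of graphs $V_x$, $V_y$) and only afterwards writing down $\psi_t$---but the computational content is identical, and both end with ``the monodromy formula is a direct consequence of \eqref{psidef}''. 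One small terminological slip: $\psi_t$ is not a homeomorphism of $M$ onto $F^{-1}(t)$, since $M$ has boundary while $F^{-1}(t)\cong\bbC^\ast$ does not; it is an embedding onto a deformation retract (equivalently, a homotopy equivalence of pairs), which is what ``model'' means here and is all that is needed for the homology statements that follow.
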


The surface $M$ and its monodromy transformation described in the above lemma are drawn on the figure \ref{fig:toymon3d}.

\begin{figure}[htpb]
\input{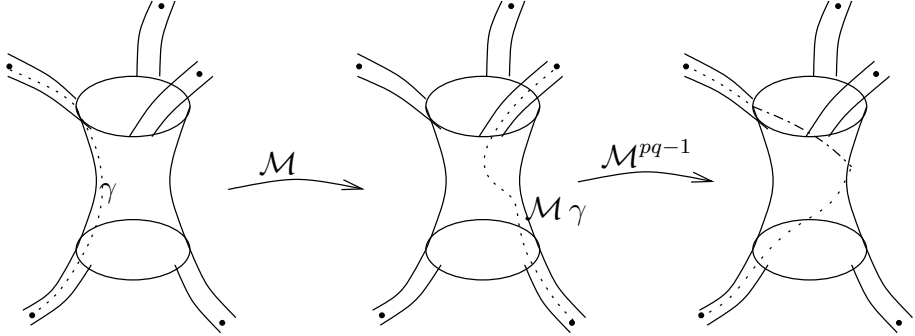}
\caption{Monodromy transformation of the model surface $M$ and a relative cycle $\gamma$.}
\label{fig:toymon3d}
\end{figure}

\begin{proof}
Complex level curves $F^{-1}(t)$ intersect line at infinity in two points: $[1:0:0]$ and $[0:1:0]$. The neighborhood of any of them is a punctured disc. Thus, there exists an isotopy of the level curve $F^{-1}(t)$ shrinking it to the region $\{|x|\leq R,\ |y|\leq R\}$ for sufficiently big $R$.

We will assume that $t$ is sufficiently close to $0$. The intersection of $F^{-1}(t)$ with the neighborhood $\{|x|\leq r,\ |y|\leq r\}$ of $(0,0)$ is a cylinder parametrized by the formula
\begin{equation}
\label{toyzerdisk}
u\mapsto (g^q\, u^q,\ g^{-p}\,u^{-p}t^{1/q}),
\end{equation}
where $g(t,u)$ is a function which will be fixed later.

The intersection of $F^{-1}(t)$ with set $\{|x|\leq R,\ |y|\leq R\}\setminus \{|x|\leq r,\ |y|\leq r\}$ decomposes into two connected components $V_x$ and $V_y$; one is located close to the $x$-plane and another to the $y$-plane respectively. The component $V_x$ is a graph of multi-valued ($q$-valued) function $y=t^{1/q}x^{-p/q}$ defined over the ring $\{r \leq|x|\leq R\}$. Marked points are images of point $x=L$ located on the real axis. We deform this domain by isotopy to the strip $S$ along real line -- see figure \ref{fig:toymod}.
\begin{figure}[htpb]
\input{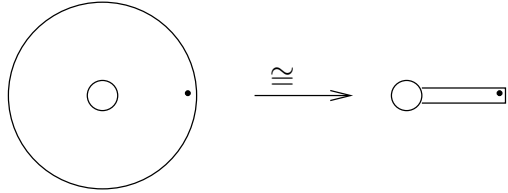}
\caption{Deformation of domain to the strip $S$}
\label{fig:toymod}
\end{figure}
The values (leaves) of function $x^{-p/q}$ are numbered by $\nu\in\bbZ/q$. Thus, the domain $U_x$ and the map $\psi_t$ is defined as in lemma.

The model of $V_y$ is constructed in an analogous way.
To glue the above map together with parametrization \eqref{toyzerdisk} of disk around zero, we use the auxiliary function $g$. It must be equal $1$ near the internal circle of the ring $C(r,R)$ (i.e. $|u|=r$) and $t^{1/pq}$ near the exterior boundary ($|u|=R$). It is easy to check that $g=\exp(\tfrac{1}{pq}\log t\, \varphi(\tfrac{|u|-r}{R-r}))$ solves the problem.

The formula \eqref{toymon} for the monodromy around $t=0$ is a direct consequence of the formula \eqref{psidef}.\\
\end{proof}

A 2-dimensional version of figure \ref{fig:toymon3d} presenting the model surface $M$ is drawn below. It is obtained from figure \ref{fig:toymon3d} by cutting the cylinder along a vertical line. We will use these planar style of drawing models in subsequent, more complicated cases.

The segments that are unified are marked with arrows. Strips $U_x$ and $U_y$ are enumerated by integers $\tfrac{q}{2\pi}\arg u$ and $\tfrac{p}{2\pi}\arg u$ respectively; the argument $\arg u$ is calculated in point $u\in U_c$ which is glued with the point $1\in S$ according to relations \eqref{trfn}. Generators of the relative homology of $M$ are also marked.
\begin{figure}[htpb]
\input{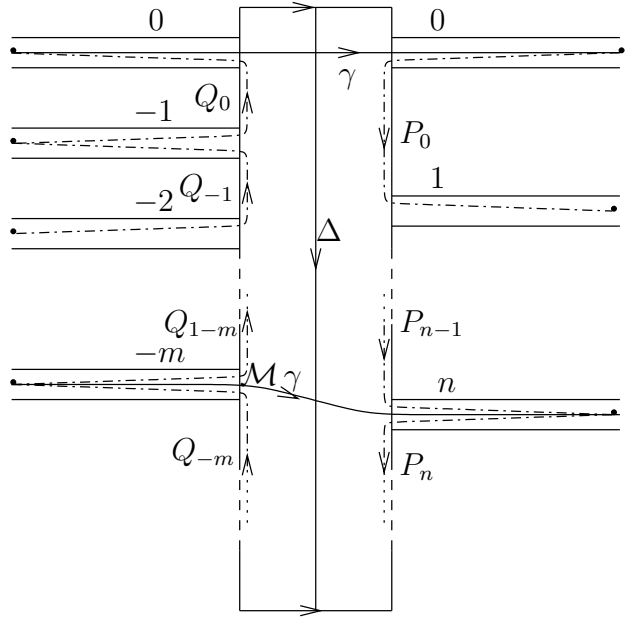}
\caption{The model surface $M$ with generators of the relative homology.}
\label{fig:toy2d}
\end{figure}

\begin{proposition}
\label{prop:toyhom}
The relative homology is $H_1(F^{-1}(t),B_t)$of the complex fiber $F^{-1}(t)$ has dimension $p+q$. It is generated by cycles
\[
\gamma,\Delta,Q_0,\ldots,Q_{q-1},P_0,\ldots,P_{p-1}
\]
with the relations:
\[
Q_0+\cdots+Q_{q-1}=-\Delta,\qquad P_0+\cdots+P_{p-1}=\Delta.
\]
The monodromy representation on the relative homology space reads:
\begin{equation}
\label{toymonhom}
\begin{split}
\mon Q_j=Q_{j-m},\qquad \mon P_k=P_{k+n},\qquad \mon \Delta = \Delta\\
\mon \gamma = \gamma+Q_0+\cdots+Q_{-m+1}+P_0+\cdots +P_{n-1}.
\end{split}
\end{equation}
\end{proposition}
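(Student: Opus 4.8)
The plan is to compute everything inside the explicit model $(M,B_t)$ supplied by Lemma \ref{lem:toy}: the cylinder $U_c=C(r,R)$ carrying $q$ strips $U_x$ glued to its outer circle and $p$ strips $U_y$ glued to its inner circle, with the $p+q$ marked points sitting at the free ends of the strips. The first step is to record the homotopy type. Since each strip is a rectangle attached to the cylinder along a single end, $M$ deformation retracts onto the core circle of $U_c$ together with $p+q$ disjoint arcs running from that circle out to the marked points. Feeding this one-complex into the long exact sequence of the pair, with $H_1(M)=\bbZ$, $H_1(B_t)=0$, $H_0(B_t)=\bbZ^{p+q}$ and the connecting map $H_0(B_t)\to H_0(M)$ of rank one, gives the short exact sequence $0\to H_1(M)\to H_1(M,B_t)\to \ker\big(H_0(B_t)\to H_0(M)\big)\to 0$ and hence $\dim H_1(M,B_t)=1+(p+q-1)=p+q$, the asserted dimension.

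Next I would pin down geometric representatives. Take $\Delta$ to be the core loop of $U_c$ (equivalently either boundary circle, the two being homologous as the oriented ends of the cylinder). Take $Q_j$ to be the relative $1$-cycle that leaves the marked point of the $j$-th $x$-strip, runs down to the outer circle, along it to the neighbouring attaching point, and back out to the next $x$-marked point; define $P_k$ analogously from consecutive $y$-strips on the inner circle. Finally let $\gamma$ be a transversal relative cycle crossing $U_c$ radially from a distinguished $x$-marked point to a distinguished $y$-marked point. Telescoping the up-and-down strip portions, the cyclic sum $Q_0+\dots+Q_{q-1}$ collapses to the outer boundary circle traversed once and $P_0+\dots+P_{p-1}$ to the inner one; with the orientations induced from $U_c$ these equal $-\Delta$ and $+\Delta$, which are the two stated relations. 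Counting dimensions ($q-1$ independent $Q$'s, $p-1$ independent $P$'s, together with $\Delta$ and $\gamma$) confirms that these classes, modulo the two relations, form a basis of the $(p+q)$-dimensional group.

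The monodromy on three of the four families is then immediate from \eqref{toymon}. Because the exponent in $\mon(u)$ depends only on $|u|$, the map $\mon$ fixes every circle $|u|=\mathrm{const}$ setwise and preserves its orientation, so $\mon\Delta=\Delta$. Because $\mon(x,\nu)=(x,\nu+1)$ cyclically permutes the $x$-strips, while the enumeration index $j$ of a strip is tied to its chart index $\nu$ through the transition function \eqref{trfn} by $j\equiv -\nu m \pmod q$, the shift $\nu\mapsto\nu+1$ reads $j\mapsto j-m$, whence $\mon Q_j=Q_{j-m}$; the identical computation with $u(y,\mu)=y^{-1/p}e^{2\pi i\mu n/p}$ and $k\equiv \mu n\pmod p$ yields $\mon P_k=P_{k+n}$.

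The one genuinely delicate point, which I expect to be the main obstacle, is the action on the transversal cycle $\gamma$. Here $\mon$ displaces both ends of $\gamma$ — the $x$-end from strip $0$ to strip $-m$ and the $y$-end from strip $0$ to strip $n$ — while the differential twist of $\mon$ across $U_c$ drags the body of $\gamma$ over the intervening strips; note the rotation is $-2\pi m/q$ on the inner circle and $2\pi n/p=2\pi(\tfrac1{pq}-\tfrac mq)$ on the outer, the two differing by exactly $2\pi/(pq)$ in accordance with $pm+qn=1$. Writing $\mon\gamma-\gamma$ as the boundary of the $2$-chain swept out by the isotopy that interpolates $\mon$ from the identity, one must verify that the swept region meets precisely the $x$-strips indexed $0,-1,\dots,-m+1$ and the $y$-strips indexed $0,1,\dots,n-1$, each once and with positive sign. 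Reconciling the angular rotations with the strip spacings $2\pi/q$ and $2\pi/p$, and fixing the two index ranges and their signs, is exactly where the relation \eqref{pqmn} is forced into the final identity $\mon\gamma=\gamma+Q_0+\dots+Q_{-m+1}+P_0+\dots+P_{n-1}$, and it is where all the bookkeeping risk lies.
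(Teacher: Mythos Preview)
Your proposal is correct and follows exactly the route the paper takes: the paper's entire proof is the single sentence ``The proposition is a direct consequence of Lemma~\ref{lem:toy},'' and you have simply unpacked what that sentence means by working inside the explicit model $M$ and reading off generators, relations, and the monodromy from formulas \eqref{trfn} and \eqref{toymon}. Your long-exact-sequence dimension count and your derivation of $\mon Q_j$, $\mon P_k$, $\mon\Delta$ from the strip-permutation and the enumeration convention $j\equiv -\nu m\pmod q$, $k\equiv \mu n\pmod p$ are the details the paper leaves to the reader (and to Figure~\ref{fig:toy2d}); the swept-chain argument you outline for $\mon\gamma$ is likewise the intended geometric picture, and your caution about the index bookkeeping there is well placed but does not indicate a gap.
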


The proposition is a direct consequence of lemma \ref{lem:toy}.

\vskip 1cm \subsection{The parabolic case} \label{sec:parab}

Consider the fibration given by a polynomial $F=x^p (y^2+x-1)^q$, where $p,q$ is a pair of positive, relatively prime integer numbers. Thus, they satisfy the relation \eqref{pqmn}
with a pair of integers $m,n$. They must be of opposite signs; we assume $m>0$ and so $n\leq 0$.

The base of locally trivial fibration in this case is a plane with two points removed $B=\bbC\setminus\{0,c\}$,
where $c=(\tfrac{p}{p+q})^p(\tfrac{-q}{p+q})^q$ corresponds to a center $(\tfrac{p}{p+q},0)$ of the Hamiltonian vector field $X_F$. The cycle $\gamma_t$ for $t\in (0,c)$ is an oval (compact component) of the real level curve $F^{-1}(t)$.

The model of complex fiber is presented on figure \ref{fig:parab}. It consists of two cylinders and $p+q$ strips glued together as shown on the figure. Cylinders are drown as rectangles, with horizontal edges unified. To simplify the combinatorial structure, there are opposite orientations on these two cylinders. Vertical, dotted lines mark another unification.
\begin{figure}[htpb]
\input{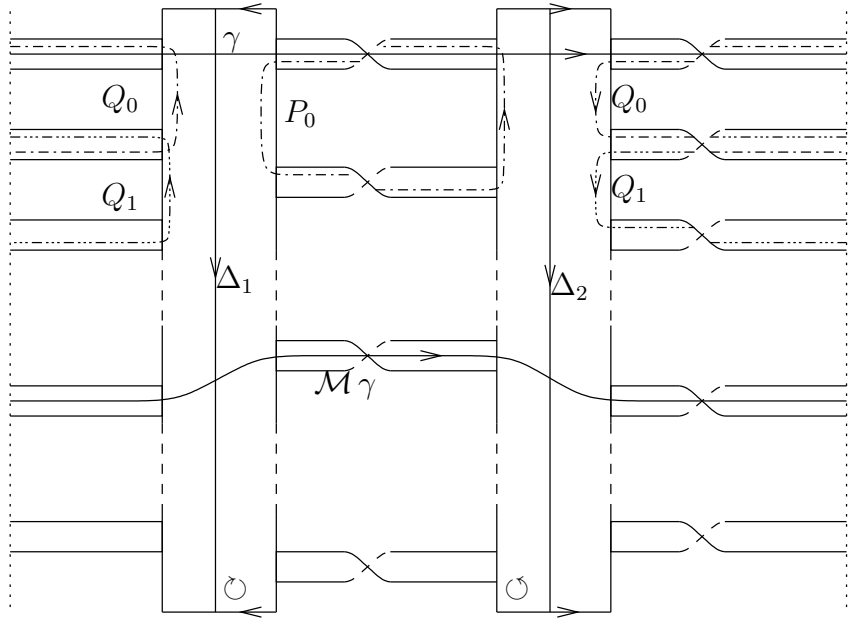}
\caption{Model of the level curve $F^{-1}(h)$ for $F=x^p(y^2 + x-1)^q$.}
\label{fig:parab}
\end{figure}

\begin{lemma}
\label{lem:parabmodel}
The surface shown on the figure \ref{fig:parab} provides a model $M$ for complex fiber $F^{-1}(t)$. The homology group $H_1(M)$ has dimension $p+q+1$ and is generated by cycles $\gamma,\Delta_1,\Delta_2,Q_0,\ldots,Q_{q-1},P_0,\ldots,Q_{q-1}$ with the following relations
\begin{equation}
\label{parel}
Q_0+\cdots+Q_{q-1} = \Delta_2-\Delta_1, \qquad, P_0+\cdots+P_{p-1}= \Delta_1-\Delta_2.
\end{equation}
Intersection indices of $\gamma$ and other generators of the homology group reads
\begin{equation}
\label{parints}
\begin{split}
\gamma\cdot Q_0=-1, \quad  \gamma\cdot Q_{q-1}=-1,\quad \gamma\cdot Q_j=0, \ \text{for}\ j=1,\ldots,q-2,\\
\gamma\cdot P_0=+1, \quad \gamma\cdot P_{p-1}=+1,\quad \gamma\cdot P_j=0, \ \text{for}\ j=1,\ldots,p-2,\\
\gamma\cdot\Delta_1 = +1,\qquad \gamma\cdot \Delta_2 = -1.
\end{split}
\end{equation}
The monodromy around zero  takes the form:
\begin{equation}
\label{parmon}
\begin{split}
\mon_0 Q_j = Q_{j+m},\qquad \mon_0 P_k = P_{k-n}, \qquad \mon_0 \Delta_j = \Delta_j \\
\mon_0 \gamma = \gamma+Q_0+\cdots+Q_{m-1}+P_0+\cdots +P_{-n+1}.
\end{split}
\end{equation}
\end{lemma}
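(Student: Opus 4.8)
The plan is to exhibit $F^{-1}(t)$ as a double cover of the toy fiber of Lemma \ref{lem:toy} and to transport the whole combinatorial picture---model, homology, intersection form and monodromy---through this cover. Introduce the auxiliary coordinate $v=y^2+x-1$. The map $\pi\colon (x,y)\mapsto (x,v)$ realizes $\bbC^2$ as a double cover of the $(x,v)$-plane, ramified along $\{y=0\}=\{v=x-1\}$, and it sends $F^{-1}(t)$ onto the toy level curve $\{x^p v^q=t\}$ studied in Lemma \ref{lem:toy} and Proposition \ref{prop:toyhom}. Thus $\pi\colon F^{-1}(t)\to \{x^pv^q=t\}$ is a two-sheeted cover whose branch points are the solutions of $x^p(1-x)^q=t$; for generic small $t$ there are exactly $p+q$ of them, $p$ accumulating at $x=0$ and $q$ at $x=1$ as $t\to 0$. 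The covering involution is $\sigma\colon(x,y)\mapsto(x,-y)$.

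Next I would build the model $M$ as the lift of the toy model. Over the central cylinder of the toy model, which contains no branch point, the cover is trivial, so it lifts to the \emph{two} cylinders of Figure \ref{fig:parab}; their core circles are $\Delta_1$ and $\Delta_2$, the two preimages of the core of the toy cylinder. Each of the $p+q$ branch points sits in one of the toy strips, so each such strip lifts to a single band joining the two sheets across a branch cut: these are the bands $P_0,\dots,P_{p-1}$ (over the $p$ branch points near $x=0$) and $Q_0,\dots,Q_{q-1}$ (over the $q$ near $x=1$). This produces exactly the surface of Figure \ref{fig:parab}, the two cylinders being drawn with opposite orientations as a bookkeeping device for the branch cuts. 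A Riemann--Hurwitz count then fixes the dimension: removing the $p+q$ branch points from the toy cylinder ($\chi=0$) gives $\chi=-(p+q)$, the unramified double cover doubles this to $-2(p+q)$, and filling the $p+q$ ramification points restores $\chi(F^{-1}(t))=-(p+q)$, whence $\dim H_1=1-\chi=p+q+1$. The relations \eqref{parel} are read off from the gluing: the bands $Q_j$ together with the two cores bound, giving $\sum_j Q_j=\Delta_2-\Delta_1$, and symmetrically $\sum_k P_k=\Delta_1-\Delta_2$.

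With the model in place, the intersection indices \eqref{parints} are computed directly from Figure \ref{fig:parab}: the cycle $\gamma$, the lift of the real oval, meets each of the extreme bands $Q_0,Q_{q-1}$ and $P_0,P_{p-1}$ transversally with the indicated signs, is disjoint from the interior bands, and crosses each core once, giving $\gamma\cdot\Delta_1=+1$ and $\gamma\cdot\Delta_2=-1$. Finally, the monodromy \eqref{parmon} is the lift of the toy monodromy \eqref{toymonhom}. As $t$ encircles $0$, the $p$ branch points near $x=0$ are permuted cyclically (since $x^p\sim t$ forces $x\mapsto x\,e^{2\pi i/p}$) and likewise the $q$ near $x=1$ (via $(1-x)^q\sim t$); after the identification fixed by \eqref{pqmn} these cyclic permutations become the shifts $P_k\mapsto P_{k-n}$ and $Q_j\mapsto Q_{j+m}$, exactly as the shifts by $m$ and $n$ arose in Proposition \ref{prop:toyhom}. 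The cores $\Delta_1,\Delta_2$ are preserved, and tracking how $\gamma$ is dragged across the bands it sweeps yields the stated formula for $\mon_0\gamma$.

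The step I expect to be the main obstacle is the global bookkeeping of the cover rather than any single local computation: one must check that the two ends of the toy cylinder lift correctly (the covering monodromy there being $(-1)^p$ and $(-1)^q$, which governs whether an end stays connected), that the branch cuts are joined into bands compatibly with a coherent orientation, and that the resulting signs in \eqref{parints}, the precise index shifts, and the $\gamma$-term in \eqref{parmon} all match. These are precisely the places where a sign or an off-by-one error can creep in, so they deserve the careful, figure-driven verification indicated above.
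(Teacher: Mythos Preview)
Your argument is correct in outline but follows a genuinely different route from the paper's. The paper does \emph{not} use the global double cover $\pi:(x,y)\mapsto(x,y^2+x-1)$. Instead it proceeds by direct dissection: it shrinks $F^{-1}(t)$ into a big bidisc, cuts out small neighbourhoods of the two saddles $(0,\pm1)$, and observes that in the local chart $(x,\,y^2+x-1)$ each of these neighbourhoods is literally a copy of the toy cylinder of Lemma~\ref{lem:toy}. The complement splits into a piece $V_l$ near the line $x=0$ (a graph of a $p$-valued function over a twice-punctured disc, isotoped to $p$ strips) and a piece $V_p$ near the parabola $y^2+x-1=0$ (a graph of a $q$-valued function, isotoped to $q$ strips). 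Gluing two toy cylinders with the two batches of strips gives Figure~\ref{fig:parab}, and the formulas \eqref{parel}--\eqref{parmon} are then read off exactly as in Proposition~\ref{prop:toyhom}, applied once at each saddle. So in the paper the toy example enters only \emph{locally}, twice; the two cylinders are not the two sheets of a global cover but the local toy pictures at the two saddles.

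Your approach trades this piecewise construction for the single global observation that $F^{-1}(t)$ is a branched double cover of the toy fibre. This is conceptually clean---the Riemann--Hurwitz count for $\dim H_1$ is immediate, and the monodromy and intersection formulas are lifted rather than recomputed---but it shifts the work to the covering bookkeeping you yourself flag: locating the $p+q$ branch points relative to the toy decomposition, checking the covering monodromy at the two ends, and matching orientations and index conventions so that the signs in \eqref{parints} and the shifts $j\mapsto j+m$, $k\mapsto k-n$ (opposite to those in Proposition~\ref{prop:toyhom}) come out right. The paper's piece-by-piece method avoids these global compatibility checks at the cost of doing the toy analysis twice; your method is more structural but demands exactly the careful verifications you list in your last paragraph.
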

\begin{proof}
The idea of proof is similar to the proof of lemma \ref{lem:toy}. We shrink the level curve $F^{-1}(t)$
by isotopy to the region $\{|x|\leq R,\ |y|\leq R\}$. We take the value of $t$ sufficiently close to 0. The intersections of $F^{-1}(t)$ with neighborhoods of saddle points $(0,1),\; (0,-1)$ are cylinders; we parametrize them by formulas similar to \eqref{toyzerdisk}. The remaining part of the fiber $F^{-1}(t)$ splits into two pieces: $V_l$ and $V_p$, located close to the line $x=0$ and close
to the parabola $y^2+x-1=0$ respectively. The part $V_l$ is the graph of $p$-valued function
$x=t^{1/p} (y^2+x-1)^{-q/p}$ defined over the disc of radius $R$ with small discs around points $y=\pm 1$ removed:
\[
U_l = \{y:\quad |y|\leq R,\ |y-1|\geq r,\ |y+1|\geq r\}.
\]
We deform the domain $U_l$ to the strip $S$ along the real segment -- see figure \ref{fig:pardeform}.
\begin{figure}[htpb]
\input{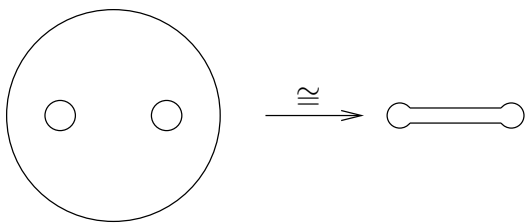}
\caption{Deformation of domain $U_l$ to the strip $S$}
\label{fig:pardeform}
\end{figure}
Leaves of function over the strip $S$ are numbered by $\mu\in\bbZ/p$. In analogous way we deform $V_p$ to the graph of the $q$-valued function defined over the strip along real segment of
the parabola $\{y^2+x-1=0\}$. Leaves of the function are numbered by $\nu\in\bbZ/q$.

We glue together both collections of strips with two cylinders in a way analogous to the toy example. Indeed,
in sufficiently small neighborhood of the point $(0,1)$ the pair of functions $(x,y^2+x-1)$ define a holomorphic
chart. In this chart the function $F$ takes the form as in the toy example. The same is true for the other
saddle $(0,-1)$. Both cylinders are glued by $p$ strips going along the line $x=0$ and $q$ strips going along
parabola $y^2+x-1$. The monodromy around zero permutes strips according to the rule
$\nu\mapsto \nu+1$, $\mu\mapsto \mu+1$, what is compatible with the formula \eqref{toymon} for monodromy in the
toy example case. Thus, the monodromy acts on both cylinders around saddles as in the toy example.

The surface shown on figure \ref{fig:parab} provides a model for complex fibre $F^{-1}(t)$. Formulas
\eqref{parmon} follow from respective formulas \eqref{toymonhom} in the toy example. The relations
\eqref{parel} and intersection indices \eqref{parints} one can read from the figure \ref{fig:parab}.\\
\end{proof}

\begin{corollary}
\[
\mon^{pq}_0 \gamma = \gamma + \Delta_2-\Delta_1.
\]
\end{corollary}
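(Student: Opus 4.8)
The plan is to reduce everything to the explicit action \eqref{parmon}, using that the cycles $Q_j,P_k,\Delta_1,\Delta_2$ span an $\mon_0$-invariant subspace $W\subset H_1(M)$ while $\mon_0$ shifts $\gamma$ by an element of $W$. Set $v:=\mon_0\gamma-\gamma\in W$; by \eqref{parmon} it is the sum of $m$ of the cycles $Q_j$ and of $-n$ of the cycles $P_k$ (recall $m>0$, $n\le 0$). Since each iterate $\mon_0^{i}v$ again lies in $W$, a one-line induction yields the telescoping identity
\[
\mon_0^{N}\gamma=\gamma+\sum_{i=0}^{N-1}\mon_0^{i}v ,
\]
so with $N=pq$ the claim reduces to computing $\sum_{i=0}^{pq-1}\mon_0^{i}v$ inside $H_1(M)$.

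First I would evaluate the $Q$-part. From \eqref{parmon}, $\mon_0^{i}Q_j=Q_{j+im}$ with indices modulo $q$, and \eqref{pqmn} forces $\gcd(m,q)=1$. As $i$ runs through $\{0,\dots,pq-1\}$, which is exactly $p$ full residue periods modulo $q$, the index $j+im$ meets each class modulo $q$ precisely $p$ times, for each of the $m$ summands of $v$; hence the $Q$-contribution equals $mp\,(Q_0+\cdots+Q_{q-1})$. The $P$-part is symmetric: $\mon_0^{i}P_k=P_{k-in}$ with $\gcd(n,p)=1$, and summing over the same range distributes each $P_r$ with multiplicity $q$ per summand, so the $P$-contribution is $(-n)\,q\,(P_0+\cdots+P_{p-1})$.

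It then remains to substitute the homology relations \eqref{parel}, namely $Q_0+\cdots+Q_{q-1}=\Delta_2-\Delta_1$ and $P_0+\cdots+P_{p-1}=\Delta_1-\Delta_2$, which collapse the two contributions onto the $\Delta$'s:
\[
\mon_0^{pq}\gamma=\gamma+mp\,(\Delta_2-\Delta_1)+(-n)q\,(\Delta_1-\Delta_2)=\gamma+(mp+nq)(\Delta_2-\Delta_1).
\]
Invoking \eqref{pqmn}, $pm+qn=1$, the coefficient is $1$ and we obtain $\mon_0^{pq}\gamma=\gamma+\Delta_2-\Delta_1$, as asserted.

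The one genuinely delicate point is the equidistribution count: one must be sure that over the window $i\in\{0,\dots,pq-1\}$ every residue class is attained with exactly uniform multiplicity, with no partial-period correction. This is precisely where the coprimality of $(m,q)$ and of $(n,p)$ — both immediate from $pm+qn=1$ — is used, together with the fact that $pq$ is an exact common multiple of $q$ and $p$; everything else is bookkeeping with the invariant cycles.
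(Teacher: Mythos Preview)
Your argument is correct. The paper states this corollary without proof, treating it as an immediate consequence of Lemma~\ref{lem:parabmodel}; your telescoping formula $\mon_0^{N}\gamma=\gamma+\sum_{i=0}^{N-1}\mon_0^{i}v$ together with the equidistribution of the shifted indices (via $\gcd(m,q)=\gcd(n,p)=1$ from \eqref{pqmn}) and the relations \eqref{parel} is exactly the computation one has to do to unwind that ``immediate''. One small remark: the displayed formula \eqref{parmon} for $\mon_0\gamma$ in the paper appears to contain a typographical slip in the upper index of the $P$-sum; the intended reading---confirmed by the later use in the proof of Theorem~\ref{th:parab}, where the blocks $(P_0+\cdots+P_{-n-1}),(P_{-n}+\cdots+P_{-2n-1}),\ldots$ appear---is that $v$ contains exactly $-n$ of the $P_k$, which is how you read it.
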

The  critical value $t=c$ corresponds to a Morse critical point of $F$. The
monodromy operator $\mon_c$ around $c$is therefore described by the usual
Picard-Lefschetz formula. Let $\gamma=\gamma(t)$ be the continuous family of
cycles, vanishing  $c$.
\begin{corollary}
\label{cor:parmoncen}
\begin{equation}
\label{parcenmon}
\begin{split}
\mon_c Q_0=Q_0 - \gamma, \qquad \mon_c Q_{q-1}=Q_{q-1}-\gamma,\\
\mon_c Q_j=Q_j, \ \text{for}\ j=1,\ldots,q-2,\\
\mon_c P_0=P_0+\gamma, \qquad \mon_c P_{p-1}=P_{p-1}+\gamma,\\
\mon_c P_j=P_j, \ \text{for}\ j=1,\ldots,p-2,\\
\mon_c\Delta_1 = \Delta_1+\gamma,\qquad \mon_c \Delta_2 = \Delta_2-\gamma.
\end{split}
\end{equation}
\end{corollary}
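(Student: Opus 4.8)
The plan is to read off \eqref{parcenmon} as a direct application of the Picard--Lefschetz formula at the Morse critical point lying over $t=c$, fed by the intersection data \eqref{parints} already established in Lemma \ref{lem:parabmodel}. Since the fibers $F^{-1}(t)$ are complex curves (real surfaces) and $\gamma=\gamma(t)$ is, by hypothesis, the cycle vanishing at $c$, the local monodromy $\mon_c$ acts on $H_1(M)$ by the reflection associated to $\gamma$. In the sign convention adapted to the orientation of the intersection pairing used above, this reads
\begin{equation}
\mon_c \delta = \delta + (\gamma\cdot\delta)\,\gamma
\end{equation}
for every homology class $\delta$; in particular $\mon_c\gamma=\gamma$, because $\gamma\cdot\gamma=0$ on an oriented surface.

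The next step is purely a substitution of the indices from \eqref{parints}. For the $Q$-cycles, $\gamma\cdot Q_0=\gamma\cdot Q_{q-1}=-1$ yield $\mon_c Q_0=Q_0-\gamma$ and $\mon_c Q_{q-1}=Q_{q-1}-\gamma$, while $\gamma\cdot Q_j=0$ for $1\le j\le q-2$ forces $\mon_c Q_j=Q_j$. For the $P$-cycles, $\gamma\cdot P_0=\gamma\cdot P_{p-1}=+1$ give $\mon_c P_0=P_0+\gamma$ and $\mon_c P_{p-1}=P_{p-1}+\gamma$, and $\gamma\cdot P_j=0$ for $1\le j\le p-2$ gives $\mon_c P_j=P_j$. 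Finally $\gamma\cdot\Delta_1=+1$ and $\gamma\cdot\Delta_2=-1$ produce $\mon_c\Delta_1=\Delta_1+\gamma$ and $\mon_c\Delta_2=\Delta_2-\gamma$. This exhausts the generating set of Lemma \ref{lem:parabmodel} and reproduces \eqref{parcenmon} term by term.

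The one genuinely delicate point, and the step I would treat as the main obstacle, is fixing the overall sign in the Picard--Lefschetz formula so that it matches the orientation conventions implicit in \eqref{parints}, since the two standard conventions differ by the substitution $\gamma\mapsto-\gamma$. Rather than tracking orientations through the explicit model of Figure \ref{fig:parab}, I would pin the sign down by a single consistency check: the displayed formula applied to $\delta=Q_0$ with $\gamma\cdot Q_0=-1$ gives $\mon_c Q_0=Q_0-\gamma$, which is exactly the asserted value, whereas the opposite convention would give $Q_0+\gamma$ and can be discarded. As an independent internal verification I would test compatibility with the homology relations \eqref{parel}: applying $\mon_c$ to $Q_0+\cdots+Q_{q-1}=\Delta_2-\Delta_1$ yields $(Q_0+\cdots+Q_{q-1})-2\gamma$ on the left and $(\Delta_2-\Delta_1)-2\gamma$ on the right, which agree, and the relation involving the $P_k$ checks out in the same way, confirming that the chosen sign is globally consistent.
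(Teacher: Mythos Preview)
Your proof is correct and matches the paper's approach exactly: the paper presents this corollary without proof, simply noting beforehand that $t=c$ is a Morse critical value with vanishing cycle $\gamma$, so that $\mon_c$ is given by the Picard--Lefschetz formula, and the intersection indices \eqref{parints} from Lemma \ref{lem:parabmodel} then yield \eqref{parcenmon} immediately. Your additional discussion of the sign convention and the consistency check against the relations \eqref{parel} is more than the paper offers, but it is in the same spirit and certainly does no harm.
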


\begin{theorem}
\label{th:parab} The related Abelian integral $I=\int_\gamma \omega$ is either
identically zero, or it does not satisfy any differential equation with
algebraic coefficients of order $k< p+q-1$.
\end{theorem}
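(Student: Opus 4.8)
The plan is to push the statement through the homological machinery of Theorems~\ref{th:main1} and~\ref{th:algeq}. Write $\gamma_j=\mon_0^{\,j}\gamma$ for the monodromy translates of the vanishing cycle around $t=0$, set $\mathcal V=\Span_{\bbC}\{\gamma_j\}\subset H_1(F^{-1}(t))$, and let $\pi(\delta)=\int_{\delta}\omega$ be the period map, with kernel $K=\ker\pi$; this $K$ is monodromy invariant, since the analytic continuation of the zero germ is zero. The $G^0$-orbit of the solution $I$ spans exactly $\pi(V)$, where $V=\Span_{\bbC}(G^0\gamma)$ is the minimal virtually invariant plane containing $\gamma$, so Theorem~\ref{th:main1} identifies the order of the minimal annihilator with $\dim\pi(V)=\dim V-\dim(V\cap K)$, and $I\equiv 0$ exactly when $\gamma\in K$, i.e.\ when $V\subseteq K$. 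It therefore suffices to prove $\dim V-\dim(V\cap K)\ge p+q-1$ whenever $I\not\equiv0$.

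First I would show $V=\mathcal V$ and $\dim\mathcal V=p+q$. Put $\delta=\Delta_2-\Delta_1$. The corollary to Lemma~\ref{lem:parabmodel} gives $\mon_0^{pq}\gamma=\gamma+\delta$, and since $\mon_0$ fixes $\Delta_1,\Delta_2$, the unipotent $U=\mon_0^{pq}\in G^0$ satisfies $U^k\gamma=\gamma+k\delta$; hence $\delta\in V$. Because $\mon_0$ preserves the intersection form and fixes $\delta$, the pairings \eqref{parints} give $\delta\cdot\gamma_j=\delta\cdot\gamma=\gamma\cdot\Delta_1-\gamma\cdot\Delta_2=2$ for every $j$. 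The Picard–Lefschetz transvection $T_j=\mon_0^{\,j}\mon_c\mon_0^{-j}$ lies in $G^0$ ($\mon_c$ is unipotent, hence in $G^0$, and $G^0$ is normal), and
\[
T_jU^k\gamma-U^k\gamma=-(\gamma\cdot\gamma_j+2k)\gamma_j\in V,
\]
which for suitable $k$ is a nonzero multiple of $\gamma_j$; thus every $\gamma_j\in V$ and $V=\mathcal V$. Since $\gamma_j-\gamma\in W:=\Span\{Q_\bullet,P_\bullet\}$, we have $\mathcal V=\bbC\gamma\oplus\Span\{\gamma_j-\gamma\}$; the latter summand is the $\mon_0$-cyclic span of $\gamma_1-\gamma$, and using \eqref{parmon}, the coprimalities $\gcd(m,q)=\gcd(n,p)=1$ (so each eigenvalue of $\mon_0$ on $W$ occurs once, the two invariant lines being identified by \eqref{parel}), one checks it equals $W$. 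Hence $\dim W=p+q-1$ and $\dim\mathcal V=p+q$.

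It remains to control the loss $\dim(V\cap K)$. Set $V_0=\mathcal V\cap K$, a virtually invariant subspace. If some $\gamma_{j_0}\in V_0$, then $V_0$ contains the minimal virtually invariant plane through $\gamma_{j_0}$, which equals $\mon_0^{\,j_0}\mathcal V=\mathcal V$ by the previous paragraph, so $V_0=\mathcal V$ and $I\equiv0$. Assuming $I\not\equiv0$, no $\gamma_j$ lies in $V_0$; invariance under each $T_j$ then forces $V_0\subseteq\gamma_j^{\perp}$ for all $j$, whence $V_0\subseteq\mathcal V\cap\mathcal V^{\perp}=\mathrm{rad}(\,\cdot\,|_{\mathcal V})$. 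Therefore
\[
\dim V-\dim(V\cap K)\ \ge\ \dim\mathcal V-\dim\mathrm{rad}(\,\cdot\,|_{\mathcal V})\ =\ \mathrm{rank}(\,\cdot\,|_{\mathcal V}),
\]
and the theorem follows once this rank is shown to be at least $p+q-1$, i.e.\ once the skew form on $\mathcal V$ is shown to have corank at most one. This last point is where the real work lies: it needs the full table of intersection indices among $\gamma,Q_\bullet,P_\bullet$ read off Figure~\ref{fig:parab}, not merely the pairings \eqref{parints} with $\gamma$, and amounts to proving that the only class in $\mathcal V$ orthogonal to all of $\mathcal V$ is at most a single puncture-type cycle. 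Granting that corank bound, $\mathrm{rank}(\,\cdot\,|_{\mathcal V})\ge p+q-1$ yields the asserted estimate, while the degenerate alternative $\mathcal V\subseteq K$ is precisely the case $I\equiv0$.
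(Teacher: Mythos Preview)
Your overall strategy coincides with the paper's: identify the minimal virtually invariant plane $V$ through $\gamma$, show $\dim V=p+q$, then prove that $V\cap K$ sits inside the radical of the intersection form on $V$ and bound that radical by one. The reduction $V_0\subseteq\gamma_j^{\perp}$ via the transvections $T_j\in G^0$ is exactly the content of the paper's Lemma~\ref{lem:ninvcnd}.

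The genuine gap is that you stop precisely where the actual work lies. You write ``granting that corank bound'' and assert that it requires the full table of intersection numbers among $Q_\bullet,P_\bullet$ read from Figure~\ref{fig:parab}. This is both incomplete and a misdiagnosis of what is needed. Because $\mathcal V=\Span\{\gamma_j\}$, a class $v\in\mathcal V$ lies in the radical if and only if $(\mon_0^{-j}v)\cdot\gamma=0$ for every $j$; this uses \emph{only} the pairings \eqref{parints} together with the action \eqref{parmon} of $\mon_0$ on the basis $\gamma,P_\bullet,Q_\bullet$---no further intersection data are required. The paper carries this computation out: writing $v=a\gamma+\sum_j\alpha_jP_j+\sum_i\beta_iQ_i$, one first forces $a=0$ (otherwise $\mon_0^{pq}$ contributes $\Delta_2-\Delta_1$, which pairs nontrivially with $\gamma$), and then the vanishing of $\gamma\cdot\mon_0^{ql}v$ for $l=0,\dots,p-1$ gives the linear system $\alpha_j+\alpha_{j+1}=\beta_0+\beta_{q-1}$ (equation~\eqref{intsind}); combined with the symmetric system coming from $\mon_0^{pl}$, the solution space has dimension at most one, with an explicit generator depending on the parities of $p$ and $q$. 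Without this step your argument produces no lower bound on the order of the annihilator. A secondary gap: your claim $\Span\{\gamma_j-\gamma\}=W$ is asserted via an eigenvalue count, but you never check that $\gamma_1-\gamma$ has nonzero component in each eigenspace of $\mon_0|_W$; the paper avoids this issue with the elementary combinatorial Lemma~\ref{linalg}.
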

\begin{proof}
The proof is based on theorem \ref{th:algeq}. Let $H$ be a $k$-dimensional subspace of the (complex) homology space $H_1=H_1(F^{-1}(t),\bbC)$ and $\gamma\in H$. Assume that the monodromy orbit of $H$ in the Grassmannian $G_k(H_1)$ is finite. We show that the dimension of $H$ satisfies $\dim H \geq p+q$.

Let $\mon_0$ be the operator of monodromy around $t=0$ (i.e.\ along a loop winding once around $t=0$); let $\mon_c$ be a monodromy around the center $t=c$.
%Consider the operator $\mon_c - \idd$, where $\mon_c$ is the monodromy around the center.
It follows formulas \eqref{parcenmon} that the $\mon_c - \idd$ is nilpotent operator and its image is one dimensional, generated by $\gamma$. The homology space $H_1$ splits into 2-dimensional $\mon_c$-invariant subspace $N$ and $(\dim H_1 -2)$-dimensional; the monodromy $\mon_c$ restricted to the latter one is the identity. The matrix of the restricted monodromy operator $\mon_c|_N$ in a basis $(\gamma,\delta)$ has the form
\[
[\mon_c|_N]_{(\gamma,\delta)}=\left(\begin{smallmatrix}1&1\\ 0&1\\ \end{smallmatrix}\right).
\]
Note that the subspace $N$ is not defined uniquely. It is spanned by $\gamma$ and any element $\delta\in H_1$ such that $\gamma\cdot \delta \neq 0$.

Consider the intersection $H\cap N$. The property that $H$ has a finite $\pi_1$ orbit (see theorem \ref{th:algeq}) implies that the $\mon_c$-orbit of $\mon_0^k H$, $k\in\bbZ$, is finite. Thus, the intersection $HN_k=(\mon_0^k H)\cap N$ has also finite $\mon_c$ orbit in $N$. The form of $\mon_c|_N$ implies that there are only 3 subspaces with a finite orbit:
\begin{equation}
\label{ninv}
HN_k=\{0\}, \qquad HN_k =\bbC\; \gamma, \qquad HN_k= N.
\end{equation}
Note that all these subspaces are $\mon_c$-invariant.
\begin{lemma}
\label{lem:ninvcnd}
Assume that the monodromy orbit of $H$ in $G_k(H_1)$ is finite. If $u\cdot \gamma \neq 0$ for an element
$u\in\mon^l_0 H$, $l\in\bbZ$, then $\gamma\in\mon^l_0 H$.
\end{lemma}
\begin{proof}
Take $\delta = u$ and consider 2-dimensional, $\mon_c$ invariant space $N$ spanned by $\gamma$ and $\delta$.
The $\mon_c$ orbit of the $\mon_0^l H$ space is finite, so the intersection $HN_l=N\cap\mon_0^l H$ has one of three
forms listed in \eqref{ninv}. Since $\delta \in HN_l$ then $HN_l=N$ and so $\gamma\in \mon_0^l H$.\\
\end{proof}

Consider the $\mon_0$-orbit of $\gamma\in H$. The lemma \ref{lem:ninvcnd} (for $l=-m$) and the property that the
intersection number is preserved by the monodromy implies the following condition
\begin{equation}
\gamma\cdot \mon_0^m \gamma \neq 0 \Rightarrow \mon_0^m \gamma\in H.
\end{equation}
Consider an element $\mon_0^{pq} \gamma = \gamma+(\Delta_2-\Delta_1)$. Since $\gamma\cdot (\Delta_2-\Delta_1)=-2$, then $(\Delta_2-\Delta_1)\in H$. Consider elements $\mon_0^l \gamma$ for $l=1,\ldots,pq$. The intersection index $| \gamma\cdot(\mon_0^l \gamma) |\leq N$. So, the cycles
\begin{equation}
\label{monkq}
\mon_0^{pq\, N+ q\, l}\gamma = \gamma + (N+lm)\,(\Delta_2-\Delta_1) + \sum_{j=0}^p a_j(l) P_j, \qquad l=1,\ldots,p
\end{equation}
have a nonzero intersection indices with $\gamma$. Since $p,q$ are relatively prime the space spanned sums $\sum_{j=0}^p a_j(l) P_j, \ l=1,\ldots,p$ coincide to the space generated by $(P_0+\cdots+P_{-n-1})$, $(P_{-n}+\cdots+P_{-2 n-1})$,\ldots; the latter one is the full space generated by $P_0,\ldots,P_{p-1}$. Both claims follow the fact that $p$ and $n$ are also relatively prime (see \eqref{pqmn}) and the following observation
\begin{lemma}
\label{linalg}
Let $V$ be a vector space of dimension $p$ and let $q$ be an integer. Assume that $p,q$ are relatively prime. Let $e_0,\ldots,e_{p-1}$ be a basis of $V$. Then the following sums
\begin{equation}
\label{qsums}
(e_0+\cdots+e_{q-1}),\ (e_{q}+\cdots+e_{2q-1}),\ \ldots (e_{(p-1)q}+\cdots+e_{pq-1})
\end{equation}
(all indices $\mod p$ assumed) generate the whole space $V$.
\end{lemma}
The proof of lemma is based on the following observations. Since $p,q$ are relatively prime, any sum of
length $q$ appears in a sequence \eqref{qsums}. The difference of two sums has the form
$e_j - e_{j+q}$, $j=0,\ldots,p-1$; they generate a hyperplane orthogonal to vector $e_0+\cdots+e_{p-1}$.
Since the scalar product $(e_0+\cdots+e_{p-1})\cdot(e_0+\cdots+e_{q-1})=q$, the space generated by
vectors \eqref{qsums} is a whole $V$.

Thus, it is proved that the subspace $H$ must contain the subspace generated by
$P_0,\ldots,P_{p-1}$. In a similar way we show that $H$ contains the subspace
generated by $Q_0,\ldots,Q_{q-1}$.

We have shown that the subspace of the homology group containing $\gamma$, with finite $\pi_1$-orbit
must be necessarily $\pi_1$-invariant hyperplane in the homology space $H_1$. It proves the theorem
for a \emph{generic} 1-form $\omega$ (when the zero subspace $Z_\omega=\{0\}$). To finish the proof
we show that either $\dim Z_\omega\leq 1$ or $Z_\omega = H$.

Consider an element
\[
H\cap Z_\omega\ni v = a\, \gamma + \sum_{j=0}^{p-1} \alpha_j P_j + \sum_{i=0}^{q-1} \beta_i Q_i
\]
and its images under the monodromy around $t=0$: $\mon_0^l v$. Since $Z_\omega$ is monodromy invariant, all elements $\mon_0^l v\in Z_\omega$. If the intersection index $\gamma\cdot\mon_0^{l_0}v\neq 0$, then monodromy around the center $t=c$ adds a multiple of $\gamma$, so $\gamma\in Z_\omega$. Then, it follows from the previous analysis that $Z_\omega=H$. Assume now that all intersection indices $\gamma\cdot \mon_0^l v=0$. The coefficient $a$ must then vanish, otherwise $\mon_0^{pq}$ adds the cycle $\Delta_2-\Delta_1$ which realizes intersection index $-2$. Consider monodromies $\mon_0^{q l}v$, $l=0,\ldots,p-1$. It preserves the expression $\sum_{i=0}^{q-1} \beta_i Q_i$. Vanishing of the intersection indices $\gamma\cdot \mon_0^{q l}v$ implies equations
\begin{equation}
\label{intsind}
\alpha_j + \alpha_{j+1} = \beta_0+\beta_{q-1}, \qquad j=0,1,\ldots,p-1.
\end{equation}
The solution of \eqref{intsind} depends on the parity of $p$. If $p$ is odd then all coefficients $\alpha_j$ are equal: $\alpha_j=\alpha=\tfrac12 (\beta_0+\beta_{q-1})$. If $p$ is even the solution of \eqref{intsind} reads:
\[
\alpha_{2 l} = \alpha_0,\quad  \alpha_{2l+1}=\alpha_1,\quad \alpha_0+\alpha_1=\beta_0+\beta_{q-1}.
\]
We repeat then the analogous analysis with iterations of $\mon_0^p$. We obtain the following form of $Z_\omega$
\begin{equation}
Z_\omega \cap H \subset
\begin{cases}
\{0\} & \text{for}\ p,q \text{ odd} \\
\mathrm{Span}(2 \sum_{j=1}^{p/2} P_{2j} + (\Delta_2-\Delta_1)) & \text{for $p$ even and $q$ odd}\\
\mathrm{Span}(2 \sum_{j=1}^{q/2} Q_{2j} - (\Delta_2-\Delta_1)) & \text{for $q$ even and $p$ odd}\\
\end{cases}
\end{equation}
Thus, $\dim Z_\omega \cap H \leq 1$ and so the theorem is proved.\\
\end{proof}
\begin{corollary}
We have actually shown that the Abelian integral does not satisfy any
differential equation with algebraic coefficients of order lower than the Fuchs
type equation with rational coefficients which follows the general theory.
\end{corollary}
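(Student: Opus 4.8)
The plan is to obtain the corollary by combining the conclusion of the proof of Theorem~\ref{th:parab} with the remark following Theorem~\ref{th:algeq}; the single new ingredient is the observation that the minimal virtually invariant space carrying $\gamma$ is not merely $G^0$-invariant but genuinely monodromy invariant, which is exactly what prevents algebraic coefficients from lowering the order below the rational one.

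First I would recall precisely what the proof of Theorem~\ref{th:parab} yields. It shows that every subspace $H\subset H_1=H_1(F^{-1}(t),\bbC)$ with $\gamma\in H$ and finite monodromy orbit must contain all of $P_0,\dots,P_{p-1}$ and $Q_0,\dots,Q_{q-1}$, hence contains
\[
V=\Span(\gamma,P_0,\dots,P_{p-1},Q_0,\dots,Q_{q-1}),
\]
and by the relation $\sum_j P_j+\sum_i Q_i=0$ read off from \eqref{parel} this space has dimension exactly $p+q$. Since $V$ itself contains $\gamma$, it is therefore the minimal virtually invariant space containing $\gamma$; by the generalization of Theorem~\ref{th:algeq} the minimal annihilator of $I$ then has order $\dim V-\dim Z_\omega$, equal to $p+q$ or $p+q-1$ once the bound $\dim Z_\omega\le 1$ established in the theorem is used and $I\not\equiv 0$.

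The new step is to upgrade the virtual invariance of $V$ to honest monodromy invariance, i.e.\ to check that the orbit of $V$ in the Grassmannian is a single point. This is a direct computation on the generators: by \eqref{parmon} the operator $\mon_0$ permutes the $P_j$ and the $Q_i$ and carries $\gamma$ into $\Span(\gamma,P_0,\dots,P_{p-1},Q_0,\dots,Q_{q-1})=V$, so $\mon_0 V=V$; by \eqref{parcenmon} the operator $\mon_c$ adds to each generator only a multiple of $\gamma\in V$, so $\mon_c V=V$ as well. As $\mon_0$ and $\mon_c$ generate $\pi_1(\bbC\setminus\{0,c\},*)$, the hyperplane $V$ is fixed by the whole fundamental group, hence is $G$-invariant and not merely $G^0$-invariant.

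Now the remark after Theorem~\ref{th:algeq} applies verbatim: because the minimal virtually invariant space $V$ containing $\gamma$ is monodromy invariant, it is simultaneously the minimal monodromy invariant space containing $\gamma$, and the coefficients of the minimal annihilator are already rational functions of $t$. Consequently the minimal annihilator with algebraic coefficients coincides with the one forced by the general theory over $\bbC(t)$ and shares its order $p+q$ or $p+q-1$; allowing algebraic rather than rational coefficients produces no further drop, which is exactly the statement of the corollary. I expect the only genuine content to be the upgrade from virtual to honest monodromy invariance of $V$ in the previous paragraph --- equivalently, the fact that $G=G^0$ on the relevant quotient --- since this is the precise mechanism by which, in contrast to \cite{hvg}, passing to algebraic coefficients buys nothing.
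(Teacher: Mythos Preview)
Your proposal is correct and follows essentially the same line as the paper. The paper gives no separate proof of the corollary: the key fact --- that the minimal virtually invariant hyperplane containing $\gamma$ is in fact $\pi_1$-invariant --- is already recorded inside the proof of Theorem~\ref{th:parab} (``the subspace of the homology group containing $\gamma$, with finite $\pi_1$-orbit must be necessarily $\pi_1$-invariant hyperplane''), and the corollary is then the immediate consequence via the remark after Theorem~\ref{th:algeq}, exactly as you argue.
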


\vskip 1cm \subsection{The special Lotka-Volterra case} \label{sec:hvg}

Consider a fibration given by a polynomial $F(x,y)=(xy)^p(x+y-1)$. It defines a locally trivial fibration defined over plane with two points removed $B=\bbC\setminus \{0,c\}$, where $c=F(\tfrac{p}{1+2 p},\tfrac{p}{1+2 p})$ corresponds to a center. The cycle $\gamma_t$ for $t\in (0,c)$ is an oval (compact component) of the real level curve $F^{-1}(t)$. Note, that the fibration has a Morse type singularity at $t=c$ and $\gamma_t$ is a vanishing cycle at the center.

Below we investigate the fibration and the monodromy representation on the sufficiently small neighborhood of $t=0$: $|t|<\ep_0$. The monodromy around center $t=c$ follows the Picard-Lefshetz formula. Thus, to determine the monodromy representation it is enough to investigate the monodromy around $t=0$ and intersection indices with the cycle $\gamma$.

The model of complex fiber is presented on figure \ref{fig:hvg} which should be understood as follows. Any rectangle represents a cylinder; unification of edges according to arrows assumed. Another unification is assumed on vertical, dotted lines.
\begin{figure}[htpb]
\input{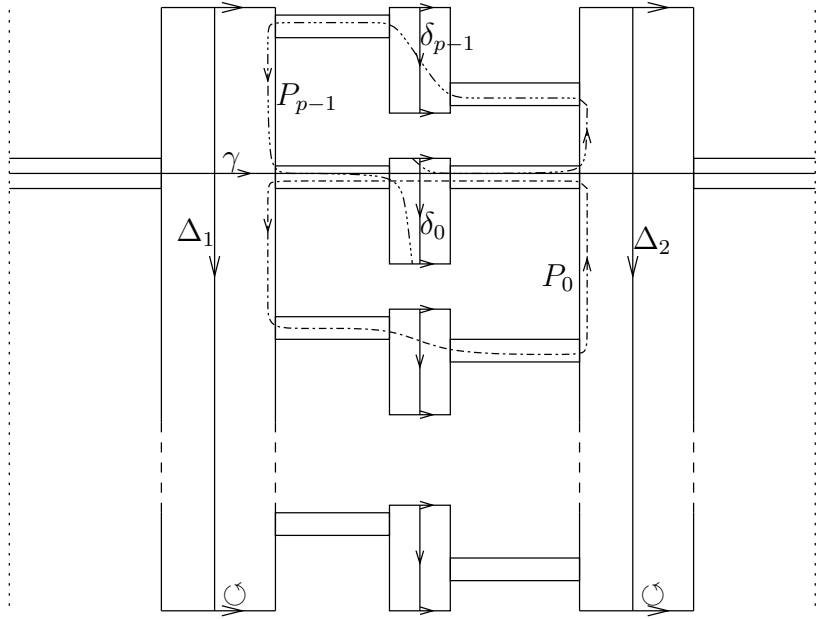}
\caption{Model of the level curve $F^{-1}(h)$ for $F=(xy)^p(x+y-1)$.}
\label{fig:hvg}
\end{figure}

\begin{lemma}
\label{lem:hvgmodel}
The complex level curve $F^{-1}(t)$ is a surface of genus $p-1$ with $3$ points removed (intersection with the line at infinity). The surface shown on the figure \ref{fig:hvg} provides a model $M$ for $F^{-1}(t)$. The homology group $H_1(M)$ has dimension $2p+2$; it is generated by cycles $\gamma,\Delta_1,\Delta_2,P_0,\ldots,P_{p-1},\delta_0,\ldots,\delta_{p-1}$ with the following relation
\begin{equation*}
P_0+\cdots+P_{p-1} = \Delta_1-\Delta_2 + \delta_0.
\end{equation*}
Intersection indices of $\gamma$ with other generators of the homology group reads
\begin{equation}
\label{hvgints}
%\begin{split}
%\gamma\cdot P_j=0, \quad  \gamma\cdot \delta_j=0 \ \text{for}\ j=0,\ldots,p-2,\\
%\gamma\cdot P_{p-1}=-1,\quad  \gamma\cdot \delta_0=-1, \quad \gamma\cdot\Delta_1 = -1,\quad \gamma\cdot \Delta_2 = -1.
%\end{split}
\begin{gathered}
\gamma\cdot P_{p-1}=-1, \qquad \gamma\cdot P_j=0, \quad \text{for}\ j=0,\ldots,p-2,\\
\gamma\cdot \delta_0=-1, \qquad \gamma\cdot \delta_j=0, \quad \text{for}\ j=1,\ldots,p-1,\\
\gamma\cdot\Delta_1 = -1,\qquad \gamma\cdot \Delta_2 = -1.
\end{gathered}
\end{equation}
The monodromy representation takes the form (monodromy around zero assumed)
\begin{equation}
\label{hvgmon}
\begin{gathered}
\mon \Delta_j = \Delta_j, \qquad \mon \delta_j=\delta_{j+1},\qquad \mon \gamma = \gamma+P_0,\\
\mon P_j = P_{j+1}, \quad \text{for}\ j=0,\ldots,p-2,\qquad\\
 \mon P_{p-1} = P_0+\delta_1-\delta_0.\\
\end{gathered}
\end{equation}
\end{lemma}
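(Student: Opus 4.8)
The plan is to run the same argument as in Lemmas~\ref{lem:toy} and~\ref{lem:parabmodel}. First I isotope the smooth fibre $F^{-1}(t)$ into the polydisc $\{|x|\le R,\ |y|\le R\}$ and fix $t$ with $0<|t|\ll 1$, so that $F^{-1}(t)$ hugs the reduced zero fibre. Set-theoretically $F^{-1}(0)$ is the union of the three lines $\{x=0\}$, $\{y=0\}$ and $\{x+y-1=0\}$, which enter $F$ with multiplicities $p$, $p$ and $1$ and meet pairwise in the corners $(0,0)$, $(0,1)$, $(1,0)$. Away from the corners the fibre is a tube around each line: solving $x^{p}=t/\bigl(y^{p}(x+y-1)\bigr)$ shows the tube over $\{x=0\}$ is $p$-sheeted, and likewise (solving for $y$) the tube over $\{y=0\}$ is $p$-sheeted, while the tube over $\{x+y-1=0\}$ is single-sheeted because that factor is simple. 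The $p$ sheets over one axis will be the strips $P_0,\dots,P_{p-1}$ and the $p$ sheets over the other the strips $\delta_0,\dots,\delta_{p-1}$, as drawn in Figure~\ref{fig:hvg}.

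Next I analyse the three necks. Near $(0,1)$ one has $F\approx x^{p}(x+y-1)$, a toy configuration with the \emph{coprime} pair $(p,1)$; by Lemma~\ref{lem:toy} the fibre there is a single cylinder, whose core is $\Delta_1$, and similarly $F\approx y^{p}(x+y-1)$ near $(1,0)$ yields the single cylinder $\Delta_2$. The corner $(0,0)$ is different: there $F\approx-(xy)^{p}$, whose level set $(xy)^{p}=-t$ is \emph{not} irreducible but splits into the $p$ disjoint hyperbolae $xy=\omega_k(-t)^{1/p}$, $\omega_k=e^{2\pi i k/p}$, each a cylindrical neck joining one sheet of the $\{x=0\}$-tube to one sheet of the $\{y=0\}$-tube. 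Gluing the two $p$-sheeted tubes and the single-sheeted tube to the cylinders $\Delta_1,\Delta_2$ and to these $p$ necks reproduces the surface of Figure~\ref{fig:hvg}.

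With the model in place the stated data are read off just as before. The homology is generated by the cylinder cores $\Delta_1,\Delta_2$, the $2p$ sheet-strips $P_j,\delta_j$, and the oval $\gamma$; expressing the sum of the $P$-strips through the neighbouring cores and the neck at $(0,0)$ gives the single relation $P_0+\dots+P_{p-1}=\Delta_1-\Delta_2+\delta_0$, and an Euler-characteristic count $\chi=1-b_1=-2p-1$, together with the three places at infinity (each a single branch, since the leading form $x^{p}y^{p}(x+y)$ has $\gcd(p,2p+1)=1$ at $[1{:}0{:}0]$ and $[0{:}1{:}0]$), pins down the genus via $\chi=2-2g-3$. The intersection numbers \eqref{hvgints} simply record that $\gamma$ meets one sheet of each tube and both cylinders, once each. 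For the monodromy around $t=0$ I use that, as $t$ turns once, $(-t)^{1/p}$ acquires the factor $e^{2\pi i/p}$, so the $p$ hyperbolae at $(0,0)$ are cyclically permuted; this is exactly the local toy monodromy \eqref{toymon}--\eqref{toymonhom} and gives $\mon\delta_j=\delta_{j+1}$ and $\mon P_j=P_{j+1}$ for $j\le p-2$, while the cores $\Delta_j$ are fixed, and the same mechanism that drags $\gamma$ across a neck yields $\mon\gamma=\gamma+P_0$.

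The main obstacle will be the degenerate corner $(0,0)$, which falls outside Lemma~\ref{lem:toy} because the exponents $(p,p)$ are not coprime, so its neck structure ($p$ components rather than one) must be established by hand. Two consequences then need care. First, one must check how the $p$ hyperbolic necks identify the indexing of the $\{x=0\}$- and $\{y=0\}$-sheets, so that the wrap-around of the $P$-strips is not the naive $P_{p-1}\mapsto P_0$ but carries the coupling correction $\mon P_{p-1}=P_0+\delta_1-\delta_0$ inherited from the shift of the adjacent $\delta$-sheets; fixing this term with the correct sign and index is the delicate point. Second, the same neck-gluing produces the extra summand $\delta_0$ in the homology relation, so the orientation bookkeeping on the $p$ necks must be carried out carefully and cross-checked against the intersection form \eqref{hvgints}. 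Everything else follows formally from Lemma~\ref{lem:toy} and the figure, exactly as in Lemma~\ref{lem:parabmodel}.
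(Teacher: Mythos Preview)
Your argument is correct and follows the same cut-and-paste strategy as the paper: isotope into a polydisc, analyse the tubes along the three lines and the necks at the three corners, then read off the homology, intersection numbers and monodromy from the resulting figure. The one organisational difference is precisely at the point you flag as ``the main obstacle'': rather than treating the non-coprime corner $(0,0)$ by the direct splitting $(xy)^p=-t$ into $p$ hyperbolae, the paper takes a $p$-th root of the whole equation \emph{away from the line} $x+y-1=0$, writing
\[
xy\,(x+y-1)^{1/p}=t^{1/p}\varepsilon_p^{\nu},\qquad \nu=0,\dots,p-1,
\]
so that each of the $p$ resulting components is literally a toy fibre with exponents $(1,1)$, and Lemma~\ref{lem:toy} applies verbatim. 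This sidesteps the non-coprimality issue entirely: the monodromy $t\mapsto e^{2\pi i}t$ cyclically permutes the components $\nu\mapsto\nu+1$, and $\mon^p$ then acts on each component as the ordinary $(1,1)$ toy monodromy \eqref{toymonhom}, which is what produces the correction $\delta_1-\delta_0$ in $\mon P_{p-1}$ without any separate bookkeeping. Your local approach reaches the same model, but the paper's global $p$-th root makes the wrap-around formula and the $\delta_0$ in the homology relation come for free.
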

\begin{proof}
The proof is analogous to proofs of lemma \ref{lem:toy} and \ref{lem:parabmodel}. We modify the level curve $F^{-1}(t)$ by isotopy to the part contained in the compact region $|x|\leq R,\ |y|\leq R$. We consider points $t$ sufficiently close to $0$. We cut the level curve $F^{-1}(t)$ into pieces lying close to lines $x=0$, $y=0$, $x+y-1=0$ and close to saddles $(0,0),\, (1,0),\, (0,1)$. The analysis of pieces of level curve $F^{-1}(t)$ close to the saddles $(1,0)$ and $(0,1)$ and close to the line $x+y-1=0$ is completely analogous to the parabola case -- see proof of lemma \ref{lem:parabmodel}. The model of this part of the level curve consist of two cylinders joined by a single strip.

Now we consider the region which is in finite distance from the line $x+y-1=0$. The level curve $F^{-1}(t)$ outside the line $x+y-1=0$ splits into $p$ components defined by the equation:
\[
xy\, (x+y-1)^{1/p}=t^{1/p} \varepsilon_p^\nu,\qquad \nu=0,1,\ldots,p-1.
\]
Any of these components coincide to the toy example with $p=q=1$. Thus, it is isotopic to the cylinder with two strips attached. As $t$ winds around zero $t\mapsto e^{2\pi i} t$ we rotate components according to the rule $\nu\mapsto \nu+1 \mod p$. The $p$-th power of the monodromy (winding $p$ times around zero) $\mon^p$ corresponds to the usual monodromy in the toy example; it follows from the formula \eqref{toymonhom} (for $p=q=1$) that $\mon^p$ adds the generator $\delta_j$.

It proves that the combinatorial structure of model of the level curve $F^{-1}(t)$, defining how cylinders and strips are glued, must be as that shown on figure \ref{fig:hvg}.\\
\end{proof}

\begin{proposition}
\label{pr:hvg}
Let $H$ be the following 2-dimensional subspace
\begin{equation*}
H=\spn (\gamma,\; \Delta_1-\Delta_2+\delta_0)
\end{equation*}
of the (complex) homology space $H_1(F^{-1}(t))$. The orbit of $H$ under the monodromy representation $\pi_1\cdot H$ in Grassmannian $Gr_2(H_1)$ consists of $p$ elements and so is finite.
\end{proposition}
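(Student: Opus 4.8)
The plan is to argue directly in the Grassmannian $Gr_2(H_1)$. The fundamental group $\pi_1(B,*)=\pi_1(\bbC\setminus\{0,c\},*)$ is free on two generators: a loop around $0$, inducing the monodromy operator $\mon$ of Lemma \ref{lem:hvgmodel}, and a loop around the center $c$, inducing $\mon_c$. I would first produce the candidate orbit $\mathcal O=\{\mon^k H:\ k=0,1,\dots,p-1\}$, then show that $\mathcal O$ is invariant under both generators and that its $p$ members are pairwise distinct; finiteness of $\pi_1\cdot H$ with exactly $p$ elements follows at once.

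First I would compute the $\mon$-orbit. Writing $w=\Delta_1-\Delta_2+\delta_0$, the relation of Lemma \ref{lem:hvgmodel} gives $w=P_0+\cdots+P_{p-1}$. An easy induction from the monodromy formulas \eqref{hvgmon} yields
\[
\mon^k\gamma=\gamma+P_0+\cdots+P_{k-1},\qquad \mon^k w=\Delta_1-\Delta_2+\delta_k,\qquad 0\le k\le p-1,
\]
while the exceptional rule $\mon P_{p-1}=P_0+\delta_1-\delta_0$ together with the cyclic action $\mon\delta_j=\delta_{j+1}$ (indices mod $p$) gives $\mon^p\gamma=\gamma+w$ and $\mon^p w=w$. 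Hence $\mon^p H=\spn(\gamma+w,\,w)=\spn(\gamma,w)=H$, so $\mathcal O$ is $\mon$-invariant and its cardinality divides $p$.

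Next I would establish $\mon_c$-invariance, which is the heart of the matter. As $\gamma$ is the cycle vanishing at the center, $\mon_c$ is the Picard--Lefschetz transvection $v\mapsto v-(v\cdot\gamma)\gamma$, so it fixes $\gamma$ and changes any $v$ only by a multiple of $\gamma$. Using the intersection indices \eqref{hvgints} I would check that, for $1\le k\le p-1$, both spanning vectors $\mon^k\gamma$ and $\mon^k w$ are orthogonal to $\gamma$: indeed $P_0+\cdots+P_{k-1}$ avoids $P_{p-1}$ and $\gamma\cdot P_j=0$ for $j\le p-2$, while $\gamma\cdot(\Delta_1-\Delta_2)=0$ and $\gamma\cdot\delta_k=0$ for $k\ge1$. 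Consequently $\mon_c$ fixes $\mon^kH$ pointwise on this basis. For $k=0$ one has instead $w\cdot\gamma\ne0$, but then $\mon_c w=w+\lambda\gamma\in H$ for a scalar $\lambda$ and $\mon_c\gamma=\gamma$, so $\mon_c H=H$ as well. Thus $\mathcal O$ is invariant under both free generators, hence under all of $\pi_1$, and since $H\in\mathcal O$ it is exactly the orbit.

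It remains to see that the $p$ subspaces are distinct. If $\mon^kH=\mon^lH$ with $0\le k,l\le p-1$, write $\mon^l w=a\,\mon^k\gamma+b\,\mon^k w$; matching the coefficient of $\gamma$ gives $a=0$ and matching that of $\Delta_1$ gives $b=1$, so $\mon^l w=\mon^k w$, and comparing the $\delta$-components $\delta_l$ and $\delta_k$ forces $k=l$. Hence $|\mathcal O|=p$. The step I expect to be the main obstacle is the $\mon_c$-invariance: one must verify that the transvection along $\gamma$ does not carry $\mon^kH$ off the list, which is precisely the vanishing of the intersection numbers $\mon^k\gamma\cdot\gamma$ and $\mon^k w\cdot\gamma$ for $1\le k\le p-1$, and this in turn relies on carefully tracking the exceptional term in $\mon P_{p-1}$ that makes $\mon^p$ close up.
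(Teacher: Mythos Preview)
Your proposal is correct and follows essentially the same approach as the paper's proof: both compute the $\mon$-orbit of $H$ using Lemma~\ref{lem:hvgmodel}, observe that $\mon^p H=H$, and then verify $\mon_c$-invariance of each $\mon^k H$ via the Picard--Lefschetz formula and the intersection numbers \eqref{hvgints}. Your argument is slightly more detailed in that you explicitly check the $p$ subspaces are pairwise distinct, which the paper simply reads off from the displayed formula for $\mon^k H$.
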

\begin{proof}
Denote by $\mon$ and $\mon_c$ the monodromy around $t=0$ and around the center critical value $t=c$. By lemma \ref{lem:hvgmodel}, we have
\begin{equation}
\label{hvgorbit}
\mon^k H = \begin{cases}
\spn \big( \gamma+P_0+\cdots+P_{k-1}, \; \Delta_1-\Delta_2+\delta_{k}\big) &\text{for}\  k=1,\ldots,p-1\\
\spn \big( \gamma+\Delta_1-\Delta_2+\delta_{0}, \; \Delta_1-\Delta_2+\delta_{0}\big)=H &\text{for}\  k=p.\\
\end{cases}
\end{equation}
The crucial observation is that subspaces $\mon^k H$ for $k\in\bbZ$ are $\mon_c$-invariant. Indeed, the subspace $H$ is $\mon_c$-invariant since $\gamma\in H$ is a vanishing cycle corresponding to the center critical value $t=c$. We calculate (using formulas \eqref{hvgints}) the intersection indices of $\gamma$ and generators of $\mon^k H$ for $k=1,\ldots,p-1$:
\[
\gamma\cdot \big( \gamma+(P_0+\cdots+P_{k-1})\big)=0,\qquad \gamma \cdot \big(\Delta_1-\Delta_2+\delta_{k})\big)=0.
\]
Thus, both generators are $\mon_c$-invariant. It proves that the orbit $\pi_1\cdot H$ in Grassmannian $Gr_2(H_1)$ consists of $p$ subspaces given in formula \eqref{hvgorbit}.\\
\end{proof}

\begin{corollary}
The proposition \ref{pr:hvg} provides a geometric explanation of phenomenon described in \cite{hvg}. According to the general theory given in theorem \ref{th:algeq} and calculations of monodromy given in lemma \ref{lem:hvgmodel} and proposition \ref{pr:hvg} the Abelian integral along cycle $\gamma$ satisfies a linear second order equation with algebraic coefficients in variable $t$.
%Calculations of coefficients are contained in \cite{hvg}.
\end{corollary}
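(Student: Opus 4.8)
The plan is to realize $\gamma$ inside a two-dimensional virtually invariant plane and then let the general machinery of Theorem~\ref{th:main1} manufacture the annihilator. Proposition~\ref{pr:hvg} supplies exactly such a plane: $H=\spn(\gamma,\;\Delta_1-\Delta_2+\delta_0)$ contains $\gamma$, has dimension two, and its monodromy orbit in $Gr_2(H_1)$ consists of $p$ points, hence is finite. By the definition of virtual invariance this says precisely that $H$ is invariant under the identity component $G^0$ of the differential Galois group. Consequently the minimal virtually invariant plane $V_t$ containing $\gamma$ is obtained as an intersection of such planes and therefore satisfies $V_t\subseteq H$, so $\dim V_t\le 2$.

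Next I would assemble the annihilator explicitly. Set $I_0=\int_\gamma\omega=I$ and $I_1=\int_{\Delta_1-\Delta_2+\delta_0}\omega$, the Abelian integrals along a basis of $H$. Following the construction of Theorem~\ref{th:main1} I form the monic Wronskian operator
\[
\widetilde{L}(y)=\frac{W(y,I_0,I_1)}{W(I_0,I_1)},
\]
which is of order two and annihilates $I_0=I$. Because $H$ is $G^0$-invariant, the coefficients of $\widetilde{L}$ are fixed by $G^0$ and hence lie in $\widetilde{k}=E^{G^0}$, a finite algebraic extension of $\bbC(t)$; in other words they are algebraic functions of $t$. Moreover, since for $p>1$ the orbit of $H$ consists of $p$ distinct points rather than a single one, these coefficients are genuinely algebraic and \emph{not} rational --- which is exactly the degree-reduction phenomenon recorded in \cite{hvg}.

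One degenerate possibility must be disposed of. If $I_0$ and $I_1$ are $\bbC$-linearly dependent as functions of $t$ --- for instance if $\int_{\Delta_1-\Delta_2+\delta_0}\omega\equiv 0$ --- then $W(I_0,I_1)$ vanishes and the recipe above instead produces a first-order annihilator; equivalently, by Theorem~\ref{th:algeq} and its refinement for reducible $V_t$, the minimal annihilator has order $\dim V_t-\dim(Z_\omega\cap V_t)\le 1$. In that situation $I$ is annihilated \emph{a fortiori} by a second-order operator (compose the first-order operator with any monic first-order factor; derivatives of algebraic functions stay algebraic), so the conclusion still holds with algebraic coefficients.

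The main obstacle --- and the place where all the preceding work is actually consumed --- is not this final assembly but the verification that a \emph{two}-dimensional virtually invariant plane through $\gamma$ exists and that its orbit is honestly finite rather than merely confined to a bounded set. That is the content of Proposition~\ref{pr:hvg}, which in turn rests on the explicit monodromy computation of Lemma~\ref{lem:hvgmodel} (together with the Picard--Lefschetz description of $\mon_c$ at the center). Once virtual invariance of $H$ is in hand, the passage from the finite monodromy orbit to a second-order operator with algebraic, non-rational coefficients is a direct application of Theorems~\ref{th:main1} and~\ref{th:algeq}.
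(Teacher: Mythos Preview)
Your proposal is correct and in fact supplies more detail than the paper does: in the paper this corollary is stated without a proof environment, as a direct summary of what Proposition~\ref{pr:hvg} together with Theorems~\ref{th:main1} and~\ref{th:algeq} imply. Your route through Theorem~\ref{th:main1} (rather than relying solely on Theorem~\ref{th:algeq}, which needs irreducibility of $V_t$) and your explicit handling of the degenerate case $W(I_0,I_1)=0$ are both appropriate refinements of the one-line reasoning the paper leaves implicit.
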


\vskip 1cm
%\noindent
%{\bf Acknowledgment}
%
%The paper was done while the first author was visiting the Universit\'e Paul Sabatier in Toulouse. He is grateful to the authorities of the University for the hospitality.

\end{document}